\documentclass[a4paper, reqno, 11pt]{amsart}
\makeatletter

\@addtoreset{equation}{section}
\makeatother
\usepackage{setspace}
\usepackage{xcolor}
\usepackage{braket}
\usepackage[T1]{fontenc}
\everymath{\displaystyle}

\usepackage{amsmath}
\usepackage{amssymb}
\usepackage{latexsym}
\usepackage{amsthm}
\usepackage{textcomp}
\usepackage{hyperref}
\usepackage{mathtools}
\usepackage{here}
\newtheorem{Thm}{Theorem}[section]

\newtheorem{Lem}[Thm]{Lemma}
\newtheorem{Prop}[Thm]{Proposition}

\newtheorem{Cor}[Thm]{Corollary}

\theoremstyle{definition}

\newtheorem{Ass}[Thm]{Assumption}

\newtheorem{Rem}[Thm]{Remark}
\newtheorem{Exa}[Thm]{Example}

\begin{document}

\begin{abstract}
We establish the Moran--Hutchinson formula for attractors of finite systems of surjective similitudes on semimetric spaces. 
More precisely, for a complete, normal semimetric space satisfying strong regularity and geometric doubling, 
we prove that the open set condition implies that the Hausdorff measure of the attractor at the similarity dimension is positive and finite.  
We also prove the converse, specifically, positivity of the Hausdorff measure at the similarity dimension implies the open set condition. 
This provides a partial answer to a question posed by Bessenyei and P\'enzes in 2022. 
\end{abstract}

\title{The Moran--Hutchinson formula in semimetric spaces}
\author{Kazuki Okamura}
\date{\today}
\address{Department of Mathematics, Faculty of Science, Shizuoka University, 836, Ohya, Suruga-ku, Shizuoka, 422-8529, JAPAN.}
\email{okamura.kazuki@shizuoka.ac.jp}
\keywords{semimetric space; Hausdorff measure; attractor; open set condition}
\subjclass[2020]{54E25, 28A78, 47H09}
\maketitle

\section{Introduction}

Self-similar sets form a central class of fractal sets. 
Hutchinson \cite{Hutchinson1981} proved that every finite family of contractions on a complete metric space has a unique compact attractor. 
He also showed that, for contracting similitudes on a Euclidean space satisfying the open set condition, the Hausdorff dimension of the attractor equals its similarity dimension and the Hausdorff measure at that dimension is positive and finite. 
Based on Bandt and Graf \cite{BandtGraf1992}, 
Schief \cite{Schief1994} established a converse in the Euclidean setting,  
specifically, positivity of the Hausdorff measure at the similarity dimension implies the open set condition.

In arbitrary complete metric spaces, the open set condition alone no longer guarantees the dimension and measure conclusions which hold in the Euclidean case; see Schief \cite{Schief1996}.  
Under the additional assumption of doubling, Balogh and Rohner \cite{BaloghRohner2007} established the Moran--Hutchinson theorem and a corresponding converse.  
Rajala and Vilppolainen generalized this doubling-space theory to a broader class of iterated function systems in \cite[Theorem 4.9]{RajalaVilppolainen2011}.  
Their non-bijective similitude example in \cite[Example 4.5]{RajalaVilppolainen2011} also shows that the open set condition alone need not determine the Hausdorff dimension from the contraction ratios.  
In another direction, Wu and Yamaguchi \cite{WuYamaguchi2017} extended the doubling-space dimension theory to asymptotic self-similar systems.

Semimetric spaces provide a different extension of the classical setting. 
A semimetric retains symmetry and separation of points but need not satisfy the triangle inequality. 
Although a topology can be defined in terms of semimetric balls, a ball itself need not be open. 
Following the early work of Fr\'echet \cite{Frechet1906} and Wilson \cite{Wilson1931}, 
several additional conditions have been introduced to recover useful local and boundedness properties. 
Bessenyei and P\'ales \cite{BessenyeiPales2017} introduced a regularity condition which serves as a local substitute for the triangle inequality, 
while Bessenyei and P\'enzes \cite{BessenyeiPenzes2022} introduced normality. 
Kocsis and P\'ales \cite{KocsisPales2022}, and independently Bessenyei and P\'enzes \cite{BessenyeiPenzes2022}, established the existence and uniqueness of attractors for finite families of weak contractions on complete regular normal semimetric spaces.

In this paper, we give a partial answer to the dimension problem posed by Bessenyei and P\'enzes. 
We consider a finite family of surjective contracting similitudes on a complete normal semimetric space whose basic triangle function satisfies a strong regularity condition and which is geometrically doubling. 
Under the open set condition, we prove that the Hausdorff measure at that dimension is positive and finite, and that the Hausdorff and Minkowski dimensions of the attractor equal its similarity dimension. 
Conversely, under the same space and mapping assumptions, positivity of this Hausdorff measure implies the open set condition.  
Rajala and Vilppolainen \cite{RajalaVilppolainen2011} consider a wider class of maps, while we consider only surjective similitudes. 
However, we work with semimetric spaces instead of metric spaces. 
Thus, our results extend the theory in a different direction and do not directly generalize the results of \cite{BaloghRohner2007} or \cite{RajalaVilppolainen2011}. 

This paper is organized as follows. 
Section \ref{sec:frame} introduces the framework and notation. 
In Section \ref{sec:inv-measure}, we construct an invariant probability measure (Proposition \ref{prop:exist-inv-measure}). 
Section \ref{sec:MH-formula} contains the proof of the Moran--Hutchinson formula (Theorem \ref{thm:main}). 
In Section \ref{sec:osc}, we prove the necessity of the open set condition (Theorem \ref{thm:osc-necessary}). 
Finally, Section \ref{sec:exa} gives examples.

\section{Framework}\label{sec:frame}

Let $X$ be a nonempty set. 
We say that a map $d : X \times X \to [0, \infty)$ is a {\it semimetric} on $X$ if $d(x,y) = 0$ if and only if $x=y$, and $d(x,y) = d(y,x)$ for every $x, y \in X$.  
For $x \in X$ and $r > 0$, define the ball $B(x,r) \coloneqq \left\{y \in X | d(x,y) < r \right\}$. 
We say that a subset $U$ of $X$ is an open set if 
for every $x \in U$, there exists $r > 0$ such that $B(x,r) \subset U$. 
This defines a topology on $X$. 
We remark that a ball may not be an open set.  

A subset of $X$ is closed if its complement is open. 
For $H \subset X$, we denote the closure of $H$ by $\overline{H}$. 
We say that a sequence $(x_n)_n$ in $X$ is convergent if  $\lim_{n \to \infty} d(x_n, x) = 0$ for some $x \in X$, and it is a Cauchy sequence if $\lim_{n, m \to \infty} d(x_n, x_m) = 0$. 
We say that $(X,d)$ is complete if every Cauchy sequence converges, and sequentially compact if every sequence has a convergent subsequence.

We introduce a notion which plays the role of the classical triangle inequality. 
We say that a map $\Phi \colon [0,\infty) \times [0,\infty) \to [0,\infty]$ is a {\it triangle function} if $\Phi(0,0) = 0$, $\Phi$ is symmetric, that is, $\Phi(s,t) = \Phi(t,s)$ for every $s, t \in [0,\infty)$, $\Phi$ is nondecreasing  in each of its arguments, and furthermore 
\[ d(x,z) \le \Phi(d(x,y), d(y,z)), \quad x, y, z \in X.  \]
For $u, v \in [0,\infty)$, let 
\[ \mathcal{I}(u,v) \coloneqq \left\{(x,y) \in X^2 \colon  \textup{ there exists }   z \in X \textup{ such that } d(x,z) \le u, d(y,z) \le v\right\}, \]
and define  
\[ \Phi_d (u,v) \coloneqq \sup\left\{d(x,y) \colon (x,y) \in \mathcal{I}(u,v) \right\} \in [0,\infty], \]
where we let $\sup \emptyset \coloneqq 0$. 
For every $x \in X$, $(x,x) \in \mathcal{I}(u,v)$, and in particular, $\mathcal{I}(u,v) \ne \emptyset$. 
We call $\Phi_d$ the {\it basic triangle function}. 
It is a triangle function and $\Phi_d (u,v) \le \Phi(u,v)$. 

We say that a semimetric space $(X,d)$ satisfies the {\it regular} property for a triangle function $\Phi$ if $\Phi$ is continuous at $(0,0)$. 
We say that $(X,d)$ satisfies the  {\it normal} property if $\Phi (u,v) \ne \infty$ for $u, v \in [0,\infty)$. 
Every metric space is a semimetric space satisfying the regular and normal properties for the basic triangle function. 

For $A \subset X$, 
\[ \textup{int}(A) \coloneqq \left\{x \in A \colon \textup{ there exists $\epsilon > 0$ such that $B(x,\epsilon) \subset A$} \right\}. \]

\begin{Lem}\label{lem:interior-open}
Assume that a semimetric space $(X,d)$ is regular. Then,\\
(i) For every $A \subset X$, $\textup{int}(A)$ is an open subset of $X$. \\
(ii) For every $r > 0$, there exists $\delta > 0$ such that $B(x,\delta) \subset \textup{int}(B(x,r))$ for every $x \in X$. 
\end{Lem}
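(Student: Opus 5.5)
The plan is to derive both parts from the continuity of $\Phi$ at $(0,0)$ via a single auxiliary claim. Here ``regular'' is understood for some triangle function $\Phi$; since $\Phi(0,0)=0$, continuity at the origin gives: for every $\varepsilon>0$ there is $\eta>0$ with $\Phi(s,t)<\varepsilon$ whenever $s,t<\eta$. Monotonicity of $\Phi$ lets us restrict to $s=t$ when convenient.

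\emph{Auxiliary claim}: for every $r>0$ there is $\delta>0$, independent of the centre, such that $y\in B(x,\delta)$ implies $B(y,\delta)\subset B(x,r)$. To prove it, choose $\delta$ with $\Phi(\delta,\delta)\le r'$ for some $r'<r$; regularity makes this possible. Then for $z\in B(y,\delta)$ we get
\[ d(x,z)\le\Phi(d(x,y),d(y,z))\le\Phi(\delta,\delta)<r. \]
This claim immediately yields (ii). Indeed, each $y\in B(x,\delta)$ has $B(y,\delta)\subset B(x,r)$, so by the definition of the interior $y\in\textup{int}(B(x,r))$. Hence $B(x,\delta)\subset\textup{int}(B(x,r))$, and $\delta$ depends only on $r$ and $\Phi$.

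For (i), take $x\in\textup{int}(A)$, so $B(x,\varepsilon)\subset A$ for some $\varepsilon>0$. We must find $\rho>0$ with $B(x,\rho)\subset\textup{int}(A)$. Apply the claim with $r=\varepsilon$ to obtain $\delta$, and set $\rho=\delta$. For $y\in B(x,\delta)$ we have $B(y,\delta)\subset B(x,\varepsilon)\subset A$, and $y\in A$, so $y\in\textup{int}(A)$. Thus $\textup{int}(A)$ contains a ball around each of its points, and by the paper's definition of the topology it is open.

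The only delicate step is the auxiliary claim. The point is that balls need not be open, so we cannot argue from the openness of $B(x,\varepsilon)$. Regularity of $\Phi$ is exactly the substitute: it supplies a uniform $\delta$ through a single two-step use of the triangle-function inequality. Everything else is unwinding definitions.
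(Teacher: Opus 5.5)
Your proposal is correct and follows essentially the same route as the paper: use continuity at the origin to choose $\delta$ with $\Phi(\delta,\delta)<r$ (the paper uses $\Phi_d$, which is equivalent since $\Phi_d\le\Phi$), then apply the triangle-function inequality once to get $B(y,\delta)\subset B(x,r)$ for every $y\in B(x,\delta)$. The paper runs this argument directly for (i) and notes that (ii) is identical, while you factor it into a centre-independent auxiliary claim and derive both parts from it; this is only a cosmetic difference.
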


Part (ii) is the same as \cite[Lemma 2]{KocsisPales2022}. 

\begin{proof}
(i) Let $x \in \textup{int}(A)$. 
Then there exists $\epsilon > 0$ such that $B(x,\epsilon) \subset A$. 
By the regularity, there exists $\delta > 0$ such that $\Phi_d (\delta, \delta) < \epsilon$. 
It suffices to show that $B(x,\delta) \subset  \textup{int}(A)$.  
Let $y \in B(x,\delta)$. 
Then, for every $z \in B(y,\delta)$, 
\[ d(x,z) \le \Phi_d (d(x,y), d(y,z)) \le \Phi_d (\delta, \delta) < \epsilon. \]
Hence, $z \in B(x,\epsilon) \subset A$. 
Hence $B(y,\delta) \subset A$, which means $y \in \textup{int}(A)$. 

Part (ii) follows by the same argument. 
\end{proof}

By Lemma \ref{lem:interior-open}, we can show that every regular semimetric space is Hausdorff. 

The following result means that the topological and sequential closures of a set coincide. 

\begin{Lem}\label{lem:seq-closure}
If a semimetric space $(X,d)$ is regular, then for every $H \subset X$, 
\begin{equation}\label{eq:closure-limit} 
\overline{H} = \left\{x \in X \colon \textup{there exists a sequence $(x_n)_n$ in $H$ such that $\lim_{n \to \infty} d(x_n, x) = 0$} \right\}. 
\end{equation}
\end{Lem}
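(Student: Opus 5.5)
We prove the two inclusions separately. Write $S$ for the right-hand side of \eqref{eq:closure-limit}.

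\emph{The inclusion $S \subset \overline{H}$.} Let $x \in S$, with $(x_n)_n$ in $H$ and $d(x_n,x) \to 0$. Suppose that $x \notin \overline{H}$. Then $x$ lies in the open set $X \setminus \overline{H}$, so by the definition of open sets there is $r>0$ with $B(x,r) \subset X \setminus \overline{H} \subset X \setminus H$. For $n$ large we have $d(x_n,x)<r$, hence $x_n \in B(x,r)$. This contradicts $x_n \in H$. This direction does not use regularity.

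\emph{The inclusion $\overline{H} \subset S$.} This is the direction where regularity enters, and I expect it to be the main point. In a semimetric space a ball need not be open, so the condition ``every open set containing $x$ meets $H$'' does not immediately give that every ball $B(x,1/n)$ meets $H$. We bridge this gap with Lemma \ref{lem:interior-open}.

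Fix $x \in \overline{H}$ and $n \ge 1$. The set $\textup{int}(B(x,1/n))$ is open by Lemma \ref{lem:interior-open}(i). It contains $x$, because by part (ii) there is $\delta>0$ with $B(x,\delta) \subset \textup{int}(B(x,1/n))$, and $x \in B(x,\delta)$ since $d(x,x)=0$.

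Next we use that a point of the closure meets every open neighbourhood. If an open set $U \ni x$ satisfied $U \cap H = \emptyset$, then $X \setminus U$ would be a closed set containing $H$. Hence $\overline{H} \subset X\setminus U$, contradicting $x \in U$.

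Applying this to $U = \textup{int}(B(x,1/n))$, we obtain a point $x_n \in H$ with $x_n \in \textup{int}(B(x,1/n)) \subset B(x,1/n)$, that is, $d(x,x_n) < 1/n$. By symmetry of $d$, $d(x_n,x) \to 0$, so $x \in S$.

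Combining the two inclusions gives $\overline{H} = S$.
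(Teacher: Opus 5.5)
Your proof is correct, but for the inclusion $\overline{H}\subset S$ it takes a different route from the paper. The easy inclusion $S\subset\overline{H}$ matches the paper's: an open $U\ni x$ contains a ball $B(x,r)$, and that ball contains points of $H$.

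For $\overline{H}\subset S$, you take the open sets $\textup{int}(B(x,1/n))\ni x$ from Lemma \ref{lem:interior-open}(i) and use the neighbourhood characterization of the closure. The paper does not use Lemma \ref{lem:interior-open} here. It instead proves directly that $S$ is closed. Suppose $x\notin S$ but every $B(x,2^{-n})$ meets $S$. Pick $x_n\in S\cap B(x,2^{-n})$ and $y_n\in H$ with $d(x_n,y_n)\le 2^{-n}$. Then $d(y_n,x)\le\Phi_d(2^{-n},2^{-n})\to 0$, so $x\in S$, a contradiction. Minimality of the closure then gives $\overline{H}\subset S$.

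Both arguments rest on the same two-step estimate with $\Phi_d(\delta,\delta)$. Yours packages it inside Lemma \ref{lem:interior-open}(i). That makes it clear that regularity is exactly what turns $\{\textup{int}(B(x,1/n))\}_n$ into an open neighbourhood base at $x$, and it gives a shorter proof once that lemma is available. The paper's argument is self-contained and shows directly that the sequential closure is already a closed set.

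You also do not need part (ii) to get $x\in\textup{int}(B(x,1/n))$. It follows immediately from the definition of $\textup{int}$ with $\epsilon=1/n$.
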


\begin{proof}
Let $\widetilde{H}$ be the set on the right-hand side of \eqref{eq:closure-limit}. 
$\widetilde{H}$ contains $H$. 
We show that $\widetilde{H}$ is closed. 
This is equivalent to saying that $X \setminus \widetilde{H}$ is open. 
Let $x \in X \setminus \widetilde{H}$. 
Assume that for every $n \ge 1$, there exists $x_n \in \widetilde{H} \cap B(x, 2^{-n})$. 
Then, for every $n \ge 1$, there exists $y_n \in H$ such that $d(x_n, y_{n}) \le 2^{-n}$. 
By the regularity, 
\[ d(y_{n}, x) \le \Phi_d \left(d(y_{n}, x_n), d(x_n, x)\right) \le \Phi_d (2^{-n}, 2^{-n}) \to 0, \quad n \to \infty. \]
Hence $x \in \widetilde{H}$. 
This is a contradiction. 
Hence there exists $n \ge 1$ such that $ \widetilde{H} \cap B(x, 2^{-n}) = \emptyset$. 
Thus we see that $X \setminus \widetilde{H}$ is open and that $\overline{H} \subset \widetilde{H}$. 

Let $x \in \widetilde{H}$. 
Let $U$ be an open subset of $X$ containing $x$. 
Then there exists $r > 0$ such that $B(x,r) \subset U$ and there exists $y \in H$ such that $y \in B(x,r)$. 
Hence $H \cap U \ne \emptyset$. 
Hence $x \in \overline{H}$. 
Thus we see that $\widetilde{H} \subset \overline{H}$. 
\end{proof}

By this lemma, a subset of a complete regular semimetric space is complete if and only if it is closed. 
A regular semimetric space  $(X,d)$ is compact if and only if it is sequentially compact.

We now state some consequences of the normal property. 
We say that a subset $H$ of $X$ is {\it bounded} if there exist $x_0 \in X$ and $r_0 > 0$ such that $H \subset B(x_0, r_0)$. 
Let 
\[ \textup{diam}(H) \coloneqq \sup\left\{d(x,y) | x, y \in H \right\} \] 
for $H \subset X$, $H \ne \emptyset$, and we call it the {\it diameter} of $H$. 

\begin{Lem}[{\cite[Proposition 3]{BessenyeiPenzes2022}}]\label{lem:bounded-basic}
Let $(X,d)$ be a normal semimetric space and $H$ be a nonempty subset of $X$. 
Then, $H$ is bounded if and only if $\textup{diam}(H) < \infty$. 
\end{Lem}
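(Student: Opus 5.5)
Both implications follow directly from the triangle-function inequality $d(x,z) \le \Phi_d(d(x,y), d(y,z))$ together with normality, which guarantees $\Phi_d(u,v) < \infty$ for all finite $u,v$. I would use only the basic triangle function $\Phi_d$. Since $\Phi_d \le \Phi$ for any triangle function $\Phi$, normality for some $\Phi$ implies that $\Phi_d$ is finite-valued.

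\emph{Bounded implies finite diameter.} Suppose $H \subset B(x_0, r_0)$ and take $x, y \in H$. Then $d(x,x_0) < r_0$ and $d(x_0,y) < r_0$. By symmetry and monotonicity of $\Phi_d$,
\[ d(x,y) \le \Phi_d\bigl(d(x,x_0), d(x_0,y)\bigr) \le \Phi_d(r_0, r_0). \]
Taking the supremum over $x,y \in H$ gives $\textup{diam}(H) \le \Phi_d(r_0,r_0) < \infty$ by normality.

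\emph{Finite diameter implies bounded.} Suppose $D := \textup{diam}(H) < \infty$. Since $H \ne \emptyset$, pick any $x_0 \in H$. For every $y \in H$ we have $d(x_0,y) \le D < D+1$, so $H \subset B(x_0, D+1)$, which shows $H$ is bounded with $r_0 = D+1 > 0$. This direction does not even use normality.

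There is no real obstacle here. The only point needing care is that balls are defined with strict inequality while $\Phi_d$ is monotone, which is why the radius $D+1$ is used in the second direction, and why normality must be invoked for $\Phi_d$ itself (via $\Phi_d \le \Phi$).
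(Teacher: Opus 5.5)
Your proof is correct: the bound $\textup{diam}(H) \le \Phi_d(r_0,r_0) < \infty$ (with normality transferred to $\Phi_d$ via $\Phi_d \le \Phi$) and the inclusion $H \subset B(x_0, \textup{diam}(H)+1)$ make up the standard two-line argument. The paper gives no proof of its own here and only cites Bessenyei and P\'enzes, so your direct argument is a complete proof of the cited result.
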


We say that a subset $H$ of $X$ is {\it totally bounded} if for every $\epsilon > 0$ there exist finitely many points $x_1, \dots, x_n \in H$ such that $H \subset \bigcup_{i=1}^{n} B(x_i, \epsilon)$. 
By \cite[Lemma 2]{BessenyeiPenzes2022}, 
we see that if $(X,d)$ is complete and regular and $H \subset X$, then $H$ is compact if and only if $H$ is closed and totally bounded. 

{\it Hereafter, we assume that $(X,d)$ is a complete semimetric space satisfying the regular and normal properties for the basic triangle function.}

We adopt Matkowski-Rus's definition of weak contractions \cite{Matkowski1975,Rus2001}.  
We say that a map $\phi : [0,\infty) \to [0,\infty)$ is a {\it comparison function} if 
$\phi$ is nondecreasing, and 
$\lim_{n \to \infty} \phi^n (t) = 0$ for every $t > 0$. 
Let $\phi$ be a comparison function. 
We say that a map $T : X \to X$ is a {\it $\phi$-contraction} if 
$d(Tx, Ty) \le \phi(d(x,y))$ for every $x, y \in X$.

We have the following. 
\begin{Lem}\label{lem:comparison-basic}
(i) If $\phi$ is a comparison function, then, $\phi(0) = 0$ and  $\phi(t) < t$ for every $t > 0$. \\
(ii) If $\phi_1$ and $\phi_2$ are right-continuous comparison functions and $\phi \coloneqq \max\{\phi_1, \phi_2\}$, then, $\phi$ is also a right-continuous comparison function. 
\end{Lem}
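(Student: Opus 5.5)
For part (i), I would argue both claims by contradiction, using only monotonicity and the definition of a comparison function, namely $\phi^n(t) \to 0$ for $t>0$.

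\emph{The claim $\phi(t) < t$.} Suppose $\phi(t_0) \ge t_0$ for some $t_0 > 0$. Since $\phi$ is nondecreasing, induction gives $\phi^n(t_0) \ge t_0$ for every $n$. Indeed, if $\phi^{n}(t_0) \ge t_0$, then
\[ \phi^{n+1}(t_0) = \phi(\phi^n(t_0)) \ge \phi(t_0) \ge t_0. \]
This contradicts $\phi^n(t_0) \to 0$.

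\emph{The claim $\phi(0) = 0$.} Monotonicity gives $0 \le \phi(0) \le \phi(t) < t$ for every $t > 0$. Letting $t \downarrow 0$ yields $\phi(0) = 0$.

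For part (ii), set $\phi = \max\{\phi_1,\phi_2\}$. Three properties need checking.

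\emph{Monotonicity and right-continuity.} A maximum of two nondecreasing functions is nondecreasing. A maximum of two right-continuous functions is right-continuous, since $\max$ is continuous on $\mathbb{R}^2$.

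\emph{The limit $\phi^n(t)\to 0$.} This is the main point, because iterates of a maximum are not simply the maximum of the iterates. I would use the right-continuity hypothesis through the following claim: a nondecreasing, right-continuous $\psi$ with $\psi(s)<s$ for all $s>0$ satisfies $\psi^n(t)\to0$ for every $t > 0$.

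By part (i), $\phi_1(s) < s$ and $\phi_2(s) < s$ for $s > 0$. Hence $\phi(s) < s$ for all $s > 0$, and $\phi$ satisfies the hypotheses of the claim.

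\emph{Proof of the claim.} Fix $t>0$ and let $t_n \coloneqq \psi^n(t)$. The sequence $(t_n)$ is nonincreasing: if some $t_n = 0$, then $t_{n+1} = \psi(0) = 0$ (note $\psi(0) \le \psi(s) < s$ for all $s>0$, so $\psi(0)=0$); otherwise $t_{n+1} = \psi(t_n) < t_n$. So $t_n$ decreases to some limit $L \ge 0$, with $t_n \ge L$ for all $n$.

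Suppose $L>0$. If $t_n = L$ for some $n$, then $t_{n+1} = \psi(L) < L$, a contradiction. So $t_n \downarrow L$ strictly from above. Right-continuity at $L$ then gives
\[ t_{n+1} = \psi(t_n) \to \psi(L) < L. \]
But $t_{n+1} \to L$, which is again a contradiction. Hence $L = 0$.

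The one place care is needed is this passage to the limit from the right; right-continuity is exactly what makes it work.
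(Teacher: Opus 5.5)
Your proof is correct and complete. The paper gives no proof of this lemma; it simply states it as a standard property of Matkowski--Rus comparison functions. Your argument is the standard way to fill it in: monotone iteration for (i), and for (ii) the reduction to the fact that a nondecreasing, right-continuous $\psi$ with $\psi(s)<s$ for $s>0$ satisfies $\psi^n(t)\to 0$. You are also right that right-continuity is essential at that step. For example, $\psi(s)=s/2$ on $[0,1]$ and $\psi(s)=(1+s)/2$ on $(1,\infty)$ is nondecreasing with $\psi(s)<s$, yet $\psi^n(2)\to 1$.
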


We now recall some consequences of the regular property. 

\begin{Thm}[Bessenyei and P\'ales {\cite[Theorem 1]{BessenyeiPales2017}} ]\label{thm:Bessenyei-Pales-FP} 
Let $\phi$ be a comparison function and let $T : X \to X$ be a $\phi$-contraction. 
Then, there exists a unique fixed point of $T$. 
\end{Thm}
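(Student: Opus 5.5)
Uniqueness is the easy part and I would do it first. If $x = Tx$ and $y = Ty$ with $x \ne y$, then $t \coloneqq d(x,y) > 0$ and $t = d(Tx,Ty) \le \phi(t) < t$ by Lemma \ref{lem:comparison-basic}(i), which is absurd.

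For existence, fix $x_0 \in X$, set $x_n \coloneqq T^n x_0$, and aim to show that $(x_n)_n$ is Cauchy. By induction and monotonicity of $\phi$, $d(x_n, x_{n+k}) \le \phi^n(d(x_0,x_k))$. Without the triangle inequality I cannot sum consecutive steps, so I will instead use regularity as a local substitute. Fix $\epsilon > 0$. By continuity of $\Phi_d$ at $(0,0)$, choose $\delta \in (0,\epsilon]$ with $\Phi_d(\delta,\delta) < \epsilon$. Since $\phi^n(d(x_0,x_1)) \to 0$, pick $N$ with $d(x_n,x_{n+1}) < \delta$ for $n \ge N$.

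The key invariance step is to show that the ball $B(x_N,\epsilon)$ is mapped into itself, so that all iterates stay within $\epsilon$ of $x_N$. Let $y \in B(x_N,\epsilon)$. Then
\[ d(Ty, x_N) \le \Phi_d\bigl(d(Ty, Tx_N), d(x_{N+1}, x_N)\bigr) \le \Phi_d\bigl(\phi(d(y,x_N)), \delta\bigr). \]
When $d(y,x_N) < \delta$, this is at most $\Phi_d(\delta,\delta) < \epsilon$. When $\delta \le d(y,x_N) < \epsilon$, I only know $\phi(d(y,x_N)) < \epsilon$, and this is not enough. This is the main obstacle.

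My first attempt at the obstacle is to shrink the radius rather than aim for a single ball. I would seek $\eta \le \delta$ such that $\Phi_d(\phi(t),\eta) < \epsilon$ for all $t < \epsilon$. For instance, take $\eta$ with $\Phi_d(\eta,\eta)$ small and use $\phi(t) < t$ together with nondecreasingness. This can fail if $\phi(t)$ is close to $\epsilon$. If it does, I will fall back on the strategy of Bessenyei--P\'ales. They prove directly that $\limsup_{n,m} d(x_n,x_m) = 0$ by contradiction. Assume there exist $\epsilon > 0$ and index pairs $m_k > n_k \to \infty$ with $d(x_{n_k},x_{m_k}) \ge \epsilon$, where $m_k$ is chosen minimal. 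Then compare $d(x_{n_k}, x_{m_k})$ with $\phi(d(x_{n_k-1},x_{m_k-1}))$, and use regularity to move between neighbouring indices, since consecutive distances tend to $0$. To derive a contradiction this needs a uniform gap between $t$ and $\phi(t)$ on $[\epsilon, C]$. To avoid relying on right-continuity, I would use the iterate $\phi^p$, choosing $p$ so that $\phi^p(C) < \epsilon/2$. I would also need boundedness of the orbit, which I would get from normality: $\Phi_d$ is finite, and this gives $\sup_k d(x_0,x_k) =: C < \infty$ via the same invariant-ball argument at a large radius.

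Once the sequence is Cauchy, completeness gives $x_n \to x$. To conclude, note $d(x_{n+1}, Tx) \le \phi(d(x_n,x)) \le d(x_n,x) \to 0$, so $x_{n+1} \to Tx$. Limits are unique because $X$ is Hausdorff by Lemma \ref{lem:interior-open}, or directly via $d(x,Tx) \le \Phi_d(d(x,x_{n+1}), d(x_{n+1},Tx)) \to 0$ by regularity. Hence $Tx = x$.
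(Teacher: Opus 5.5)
The paper does not prove this theorem itself; it is quoted from \cite{BessenyeiPales2017}, so your argument has to stand on its own. Your uniqueness argument and your final limit step are fine. The existence part, however, has a genuine gap, and it sits exactly where your fallback puts all the weight: the bound $C=\sup_k d(x_0,x_k)<\infty$.

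You say this bound follows from normality ``via the same invariant-ball argument at a large radius''. For $T$ itself, that argument needs $\Phi_d(\phi(s),\eta)<R$ for all $s<R$ and small $\eta>0$. This is the very obstacle you identified at radius $\epsilon$. Normality only makes $\Phi_d$ finite, and regularity only controls $\Phi_d$ near $(0,0)$. Nothing prevents $\lim_{\eta\to+0}\Phi_d(u,\eta)$ from exceeding $u$ by a fixed factor at every scale. For example, on $\mathbb{R}$ let $d(x,y)=|x-y|$ if $x,y$ are both rational or both irrational, and $d(x,y)=2|x-y|$ otherwise. This is a complete quasi-metric, hence regular and normal, and $\Phi_d(u,\eta)\ge 2u+\eta$ for $u,\eta>0$. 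Take the comparison function $\phi(t)=\max\{t/2,t-1\}$. Then $\Phi_d(\phi(s),\eta)\ge 2\phi(s)+\eta\ge s+\eta$, so the one-step estimate cannot keep points with $R-\eta<d(y,x_N)<R$ inside $B(x_N,R)$, for any $R$.

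Note also that once $C$ is known, the minimal-index contradiction argument is superfluous, since $d(x_n,x_{n+m})\le\phi^n(d(x_0,x_m))\le\phi^n(C)\to0$ uniformly in $m$. Moreover, as you describe it, that argument compares distances of size about $\epsilon$ across neighbouring indices. At that scale regularity gives no estimate of the metric type $d(x_{n_k},x_{m_k})\to\epsilon$.

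The missing idea is to apply your iterate trick to the invariant ball itself, using that $T^k$ is a $\phi^k$-contraction. Given $\epsilon>0$, choose $\delta>0$ with $\Phi_d(\delta,\delta)<\epsilon$. Then choose $k$ with $\phi^k(\epsilon)<\delta$, and $N$ with $d(x_n,x_{n+k})\le\phi^n(d(x_0,x_k))<\delta$ for $n\ge N$. For $n\ge N$ and $d(y,x_n)<\epsilon$,
\[ d(T^k y,x_n)\le\Phi_d\bigl(\phi^k(d(y,x_n)),d(x_{n+k},x_n)\bigr)\le\Phi_d(\delta,\delta)<\epsilon. \]
Hence $T^k(B(x_n,\epsilon))\subset B(x_n,\epsilon)$, and $d(x_n,x_{n+jk})<\epsilon$ for all $j\ge0$.

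Now enlarge $N$ so that also $d(x_n,x_{n+i})<\epsilon$ for $0<i<k$. You get
\[ d(x_n,x_{n+i+jk})\le\Phi_d\bigl(d(x_n,x_{n+i}),d(x_{n+i},x_{n+i+jk})\bigr)\le\Phi_d(\epsilon,\epsilon). \]
Therefore $d(x_n,x_m)\le\max\{\epsilon,\Phi_d(\epsilon,\epsilon)\}$ for all $m\ge n\ge N$, and this tends to $0$ with $\epsilon$ by regularity. This gives the Cauchy property directly, with no use of normality and no contradiction argument. Your concluding step then finishes the proof.
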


Let $\mathcal{K}(X)$ be the set of nonempty compact subsets of $X$. 

\begin{Thm}[Kocsis and P\'ales {\cite[Theorem 17]{KocsisPales2022}}; Bessenyei and P\'{e}nzes {\cite[Theorem 2]{BessenyeiPenzes2022}}]\label{thm:attractor}
Let $\ell \ge 1$. 
For each $i \in \{1, \dots, \ell\}$, 
let $f_i$ be a $\phi_i$-contraction on $X$ where $\phi_i$ is a right-continuous comparison function.
Then, there exists a unique $K \in \mathcal{K}(X)$ such that $K = \bigcup_{i=1}^{\ell} f_i (K)$. 
\end{Thm}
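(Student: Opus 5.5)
The statement is the existence and uniqueness of an attractor for finitely many weak contractions, stated for a complete semimetric space that is regular and normal for $\Phi_d$. The plan is the Hutchinson contraction argument on $\mathcal{K}(X)$ with a semimetric Hausdorff distance, using Theorem \ref{thm:Bessenyei-Pales-FP} in place of Banach's theorem.

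First, put a semimetric on $\mathcal{K}(X)$:
\[ D(A,B) \coloneqq \max\Bigl\{\sup_{a \in A}\inf_{b\in B} d(a,b),\ \sup_{b\in B}\inf_{a \in A} d(a,b)\Bigr\}. \]
By Lemma \ref{lem:bounded-basic}, compact sets are bounded, so $D$ is finite; it is clearly symmetric. For separation, $D(A,B)=0$ forces $A\subset\overline{B}=B$ by Lemma \ref{lem:seq-closure}, and symmetrically $B\subset A$. Then check that $(\mathcal{K}(X),D)$ is again regular and normal. The natural bound is $\Phi_D(u,v)\le \Phi_d(u',v')$ for any $u'>u$, $v'>v$: given $D(A,C)\le u$ and $D(C,B)\le v$, each $a\in A$ has some $c\in C$ with $d(a,c)<u'$, and that $c$ has some $b\in B$ with $d(c,b)<v'$. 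Continuity of $\Phi_d$ at $(0,0)$ then transfers to $\Phi_D$.

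Second, and this is the step I expect to be the main obstacle, prove that $(\mathcal{K}(X),D)$ is complete. Take a Cauchy sequence $(A_n)$ and set
\[ A \coloneqq \{x \colon x=\lim x_{n_k} \text{ with } x_{n_k}\in A_{n_k}\}. \]
Since there is no triangle inequality, I will pass to a fast subsequence with $D(A_{n_k},A_{n_{k+1}})<\delta_k$, where the $\delta_k$ are chosen using regularity so that iterated applications of $\Phi_d$ stay small. This mirrors the proof of completeness-type statements in \cite{BessenyeiPenzes2022}. With it, chains $x_k\in A_{n_k}$ are Cauchy, hence convergent, which shows $A\neq\emptyset$ and $D(A_{n_k},A)\to0$. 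Total boundedness of $A$ comes from finite $\epsilon$-nets of a single $A_{n_k}$, pushed along by $\Phi_d$. $A$ is closed by Lemma \ref{lem:seq-closure}, so it is compact by the characterization quoted from \cite[Lemma 2]{BessenyeiPenzes2022}. Finally, regularity upgrades subsequence convergence to convergence of the whole sequence.

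Third, define $F(K)\coloneqq\bigcup_i f_i(K)$. Each $f_i$ is continuous, so $F$ maps $\mathcal{K}(X)$ into itself. Set $\phi\coloneqq\max_i\phi_i$, which is a right-continuous comparison function by Lemma \ref{lem:comparison-basic}(ii). Then
\[ D(f_i(A),f_i(B))\le \sup_a\inf_b \phi(d(a,b)) \le \phi(D(A,B)), \]
where the last inequality uses monotonicity and right-continuity of $\phi$ to pass the infimum through. A union bound gives $D(F(A),F(B))\le\max_i D(f_i(A),f_i(B))$. Hence $F$ is a $\phi$-contraction on the complete regular normal space $(\mathcal{K}(X),D)$. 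Theorem \ref{thm:Bessenyei-Pales-FP} then yields a unique fixed point $K$, and $K=\bigcup_{i=1}^{\ell} f_i(K)$ is exactly the claimed attractor.
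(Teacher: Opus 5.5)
Your proposal is correct and follows essentially the same route as the proofs the paper relies on; the paper gives no argument of its own and simply quotes Kocsis--P\'ales and Bessenyei--P\'enzes. That route is: equip $\mathcal{K}(X)$ with the Hausdorff semimetric, show it is complete, regular and normal, check that the Hutchinson operator is a $\max_i \phi_i$-contraction, and apply Theorem~\ref{thm:Bessenyei-Pales-FP}. Right-continuity of $\phi$ is used exactly to pass the infimum through $\phi$, since $d(a,\cdot)$ need not attain its infimum on a compact set, and your fast-subsequence device in the completeness step is the right substitute for the triangle inequality: choose $\eta_k \downarrow 0$ and $\delta_k$ inductively with $\Phi_d(\delta_k,\eta_{k+1})\le \eta_k$.
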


We call the set $K$ in the above theorem the {\it attractor} of $\{f_i\}_i$.

\begin{Rem}
A semimetric space $(X,d)$ is called a {\it quasi-metric space} if there exists $C < \infty$ such that $d(x,z) \le C(d(x,y) + d(y,z))$ for every $x, y, z \in X$. 
This is also known as a {\it $b$-metric space} \cite{CJT2018}. 
Every quasi-metric space is normal and regular. 
Properties of quasi-metric spaces were investigated by Bourbaki \cite{Bourbaki2007}. 
\end{Rem}

We follow \cite{BessenyeiPenzes2022} for the definitions of Hausdorff measure and Hausdorff dimension. 
These are defined in the same manner as in the case of metric spaces. 

For $A \subset X$, $\alpha \ge 0$ and $\delta > 0$, let 
\[ \mathcal{H}^{\alpha}_{\delta}(A) \coloneqq \inf\left\{ \sum_{i=1}^{\infty} \textup{diam}(B_i)^{\alpha} \colon A \subset \bigcup_{i} B_{i}, \textup{diam}(B_i) < \delta   \right\} \]
and $\mathcal{H}^{\alpha}(A) \coloneqq \lim_{\delta \to +0} \mathcal{H}^{\alpha}_{\delta}(A)$. 
The Hausdorff dimension of $A \subset X$ is defined by 
\[ \dim_H (A) \coloneqq \sup\{r > 0 \colon \mathcal{H}^r (A) = \infty \}. \]
We see that $\mathcal{H}^t (A) = 0$ for every $t > r$ whenever $\mathcal{H}^r (A) < \infty$ and that $\dim_H (A) = \inf\{r > 0 \colon \mathcal{H}^{r}(A) = 0\}$. 

We define the Minkowski dimension in the same manner as in the case of metric spaces. 
For a totally bounded subset $A$ of $X$, 
let $N(A, \epsilon)$ be the minimum number of sets of diameter less than $\epsilon$ needed to cover $A$. 
Define the upper and lower Minkowski dimensions by 
\[ \overline{\dim}_M A \coloneqq \limsup_{\epsilon \to +0} \frac{\log N(A, \epsilon)}{\log 1/\epsilon},  \textup{ and } \underline{\dim}_M A \coloneqq \liminf_{\epsilon \to +0} \frac{\log N(A, \epsilon)}{\log 1/\epsilon},  \]
respectively. 
If these two values agree, then we call their common value the {\it Minkowski dimension} of $A$ and denote it by $\dim_M A$. 
Then, 
\[ \dim_H A \le \underline{\dim}_M A \le \overline{\dim}_M  A.  \]

\section{Existence of an invariant measure}\label{sec:inv-measure}

The goal of this section is to show the following. 

\begin{Prop}\label{prop:exist-inv-measure}
Assume that $p_1, \dots, p_{\ell} > 0$, $\sum_{j = 1}^{\ell} p_j = 1$. 
For each $j \in \{1, \dots, \ell\}$, 
let $f_j$ be a $\phi_j$-contraction on $X$ where $\phi_j$ is a right-continuous comparison function. 
Let $K$ be the attractor of $\{f_j\}_j$. 
Then, there exists a Borel probability measure $\mu$ supported on $K$ such that 
\begin{equation}\label{eq:invariance-measure} 
\mu = \sum_{j = 1}^{\ell} p_j \mu \circ f_j^{-1}. 
\end{equation}
\end{Prop}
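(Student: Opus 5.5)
Since $K$ is compact and invariant, each $f_j$ maps $K$ into $K$. It therefore suffices to work on the compact space $K$ and produce a Borel probability measure $\mu$ on $K$ satisfying \eqref{eq:invariance-measure}; we then extend it by zero to $X$. Each $f_j$ is continuous for the topology of Section \ref{sec:frame}: if $d(x_n,x)\to 0$, then $d(f_jx_n,f_jx)\le \phi_j(d(x_n,x))\to 0$ by right-continuity at $0$ and Lemma \ref{lem:comparison-basic}(i). By Lemma \ref{lem:seq-closure}, sequential continuity gives continuity, so $f_j^{-1}$ maps Borel sets to Borel sets. The natural candidate is the Markov operator
\[ M\nu \coloneqq \sum_{j=1}^{\ell} p_j\, \nu\circ f_j^{-1} \]
acting on the set $\mathcal{P}(K)$ of Borel probability measures on $K$. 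We seek a fixed point of $M$.

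I would not try a Kantorovich-type contraction argument, since without the triangle inequality Lipschitz functions behave badly. I would use a coding-map construction instead, which avoids metric estimates on measures. Let $\Sigma=\{1,\dots,\ell\}^{\mathbb{N}}$ carry the product topology and the Bernoulli measure $\mathbb{P}=(p_1,\dots,p_\ell)^{\otimes\mathbb{N}}$. Fix $x_0\in K$. For $\omega\in\Sigma$, define
\[ \pi(\omega)\coloneqq\lim_{n\to\infty} f_{\omega_1}\circ\cdots\circ f_{\omega_n}(x_0). \]
Let $\phi\coloneqq\max_j\phi_j$, which is a right-continuous comparison function by Lemma \ref{lem:comparison-basic}(ii). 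The key estimate is
\[ \textup{diam}\bigl(f_{\omega_1}\circ\cdots\circ f_{\omega_n}(K)\bigr)\le \phi^n(\textup{diam} K)\to 0, \]
where $\textup{diam} K<\infty$ by Lemma \ref{lem:bounded-basic}. The sets $f_{\omega|n}(K)$ are nested, nonempty and compact, and their diameters tend to $0$. Hence the points $f_{\omega|n}(x_0)$ form a Cauchy sequence, which converges to a point of $K$ by completeness and closedness of $K$. Moreover $\pi(\omega)$ is the unique point of $\bigcap_n f_{\omega|n}(K)$; this intersection is nonempty by compactness and a singleton since the diameters vanish. The same estimate gives continuity of $\pi$: if $\omega,\omega'$ agree on the first $n$ coordinates, then $d(\pi\omega,\pi\omega')\le\phi^n(\textup{diam} K)$. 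Using Lemma \ref{lem:seq-closure} again, $\pi$ is continuous, hence Borel measurable.

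Define $\mu\coloneqq\mathbb{P}\circ\pi^{-1}$. Then $\mu$ is a Borel probability measure with $\mu(K)=1$. Invariance follows from the identity $\pi(j\omega)=f_j(\pi(\omega))$, which I would check directly from the intersection characterisation together with continuity of $f_j$. The Bernoulli measure satisfies $\mathbb{P}=\sum_j p_j\,\mathbb{P}\circ\sigma_j^{-1}$, where $\sigma_j(\omega)=j\omega$. Therefore, for a Borel set $A$,
\[ \mu(A)=\sum_j p_j\,\mathbb{P}(\{\omega:\pi(j\omega)\in A\})=\sum_j p_j\,\mathbb{P}(\pi^{-1}f_j^{-1}A)=\sum_j p_j\,\mu(f_j^{-1}A). \]
The measure is supported on $K$ in the sense that $\mu(X\setminus K)=0$. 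If the paper's notion of support demands more, one notes that the closed support $S$ is itself a compact invariant set and hence equals $K$ by Theorem \ref{thm:attractor}; I would not expect this to be needed.

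The main obstacle I anticipate is the topological bookkeeping in the semimetric setting. We need $\pi$ Borel, where the Borel $\sigma$-algebra is generated by the possibly non-metrizable topology. We also need the diameter estimate for nested images, which requires $\textup{diam}(f(A))\le\phi(\textup{diam} A)$. That bound follows from monotonicity of $\phi$ only when $\textup{diam} A$ is attained or when $\phi$ is right-continuous at the supremum. Right-continuity handles this: take a sequence of distances increasing to $\textup{diam} A$, and it suffices since $\phi$ is nondecreasing.
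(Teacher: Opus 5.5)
Your proposal is correct and matches the paper's proof essentially step for step. The paper also pushes the Bernoulli measure forward under the coding map sending $\xi$ to the unique point of $\bigcap_n f_{\xi|_n}(K)$, obtains continuity from $\textup{diam}(f_{\xi|_n}(K))\le\phi^n(\textup{diam}(K))$ with $\phi=\max_j\phi_j$, and derives invariance from $\Psi(\xi)=f_{\xi_1}(\Psi(\tilde{\xi}))$. It additionally proves that the coding map is surjective, which is not needed for this statement.

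Your closing worry is unfounded but harmless. The bound $\textup{diam}(f(A))\le\phi(\textup{diam}(A))$ follows from monotonicity alone, since $d(x,y)\le\textup{diam}(A)$ gives $\phi(d(x,y))\le\phi(\textup{diam}(A))$. Right-continuity is needed elsewhere, e.g.\ for $\max_j\phi_j$ to be a comparison function (Lemma \ref{lem:comparison-basic}(ii)), not for this bound.
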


For $\xi = (\xi_i)_i \in \{1, \dots, \ell\}^{\mathbb N}$, 
let  $f_{\xi|_{n}} \coloneqq f_{\xi_1} \circ \cdots \circ f_{\xi_n}$. 

\begin{Lem}
The set $\bigcap_{n \ge 1} f_{\xi|_{n}}(K)$ is a singleton contained in $K$. 
\end{Lem}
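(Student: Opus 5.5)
The plan is to show that the sets $K_n \coloneqq f_{\xi|_n}(K)$ form a decreasing sequence of nonempty compact subsets of $K$ whose diameters tend to zero. Then their intersection is nonempty by compactness, and it has at most one point because its diameter is zero.

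\emph{Monotonicity and compactness.} Since $K = \bigcup_{i} f_i(K)$, we have $f_{\xi_{n+1}}(K) \subset K$. Applying $f_{\xi|_n}$ gives $K_{n+1} \subset K_n$, and $K_1 = f_{\xi_1}(K) \subset K$. Each $f_j$ is continuous: if $d(x_m,x)\to 0$, then $d(f_j x_m, f_j x) \le \phi_j(d(x_m,x)) < d(x_m,x)$ by Lemma \ref{lem:comparison-basic}(i). The space is regular, so compactness coincides with sequential compactness, and continuous images of compact sets are therefore compact. Hence each $K_n$ is a nonempty compact set. A decreasing sequence of nonempty compact sets in a Hausdorff space has nonempty intersection, by the finite intersection property inside the compact set $K_1$; recall that regular semimetric spaces are Hausdorff. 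Alternatively, pick $x_n \in K_n$, extract a subsequence converging to some $x$, and use Lemma \ref{lem:seq-closure} together with the closedness of each $K_m$ (compact sets are closed in Hausdorff spaces) to get $x \in K_m$ for all $m$.

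\emph{Diameters.} Set $\phi \coloneqq \max_j \phi_j$, which is a right-continuous comparison function by Lemma \ref{lem:comparison-basic}(ii). Every $f_j$ is then a $\phi$-contraction, because each $\phi_j \le \phi$. Since $\phi$ is nondecreasing, induction gives $d(f_{\xi|_n}x, f_{\xi|_n}y) \le \phi^n(d(x,y))$. The set $K$ is compact, hence totally bounded; it is therefore bounded (by regularity/normality), so $D \coloneqq \textup{diam}(K) < \infty$ by Lemma \ref{lem:bounded-basic}. Consequently $\textup{diam}(K_n) \le \phi^n(D) \to 0$ by the definition of a comparison function; the case $D=0$ is trivial. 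If $x,y \in \bigcap_n K_n$, then $d(x,y) \le \textup{diam}(K_n)$ for every $n$, so $d(x,y)=0$ and $x=y$.

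The main obstacle is the finiteness of $\textup{diam}(K)$. Total boundedness gives finitely many balls $B(x_i,\epsilon)$ covering $K$. To get a uniform bound, we use the fact that normality makes $\Phi_d$ finite: by applying $\Phi_d$ iteratively along chains between the finitely many centers, we show $K \subset B(x_1, R)$ for some finite $R$. Then $K$ is bounded, and Lemma \ref{lem:bounded-basic} applies. Everything else is routine.
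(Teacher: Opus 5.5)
Your proposal is correct and follows the paper's proof essentially step for step: the sets $f_{\xi|_n}(K)\subset K$ are decreasing, nonempty and compact, so their intersection is nonempty, and the bound $\textup{diam}(f_{\xi|_n}(K)) \le \phi^n(\textup{diam}(K))$ with $\phi=\max_j\phi_j$ forces that intersection to be a single point. Your extra argument that the compact set $K$ is bounded fills in a step the paper leaves implicit when it invokes Lemma \ref{lem:bounded-basic}; note that one application of $\Phi_d$ from a fixed center $x_1$ already suffices, so no chains are needed.
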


\begin{proof}
Since each comparison function $\phi_j$ is right-continuous, each $f_j$ is continuous. 
Hence $f_{\xi|_{n}}$ is continuous. 
Since $K$ is compact, $f_{\xi|_{n}}(K)$ is also compact. 
Since $f_j (K) \subset K$ for each $j$, the sets $f_{\xi|_{n}}(K)$ form a decreasing sequence. 
Hence, $\bigcap_{n \ge 1}  f_{\xi|_{n}}(K)$ is not empty. 

Let $\phi \coloneqq \max_{j \in \{1, \dots, \ell\}} \phi_j$. 
Then, by Lemma \ref{lem:comparison-basic}, this is also a right-continuous comparison function. 
We see that by induction on $n$, 
$\textup{diam}( f_{\xi|_{n}}(K)) \le \phi^n (\textup{diam}(K))$. 
Since $K$ is compact, by Lemma \ref{lem:bounded-basic}, $\textup{diam}(K) < \infty$.  
 
Let $x_1, x_2 \in \bigcap_{n \ge 1}  f_{\xi|_{n}}(K)$. 
Then, $d(x_1, x_2) \le \textup{diam}( f_{\xi|_{n}}(K))$ for each $n$. 
Hence, $d(x_1, x_2) = 0$ and then $x_1 = x_2$. 
\end{proof}

For ease of notation, let $\Omega \coloneqq \{1, \dots, \ell\}^{\mathbb N}$. 
Define a map $\Psi \colon \Omega \to K$ by $\{\Psi(\xi)\} = \bigcap_{n \ge 1} f_{\xi|_{n}}(K)$.  
We equip $\Omega$ with the product topology. 

\begin{Lem}\label{lem:conti-surj}
The map $\Psi$ is a continuous surjection. 
\end{Lem}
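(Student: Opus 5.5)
Continuity and surjectivity are handled separately; both rest on the diameter bound from the preceding lemma, $\textup{diam}(f_{\xi|_n}(K)) \le \phi^n(\textup{diam}(K))$ with $\phi = \max_j \phi_j$, which tends to $0$ as $n \to \infty$ because $\textup{diam}(K) < \infty$ by Lemma \ref{lem:bounded-basic}.

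\emph{Continuity.} Fix $\xi \in \Omega$ and $\epsilon > 0$. Choose $n$ with $\phi^n(\textup{diam}(K)) < \epsilon$. The cylinder $\{\eta \in \Omega \colon \eta|_n = \xi|_n\}$ is open in the product topology. For $\eta$ in this cylinder, both $\Psi(\eta)$ and $\Psi(\xi)$ lie in $f_{\xi|_n}(K)$, so
\[ d(\Psi(\eta), \Psi(\xi)) \le \textup{diam}(f_{\xi|_n}(K)) < \epsilon. \]
Thus $\Psi(\eta) \in B(\Psi(\xi), \epsilon)$. Balls need not be open, so this has to be converted into topological continuity. For an open $U \ni \Psi(\xi)$, pick $r>0$ with $B(\Psi(\xi), r) \subset U$ and apply the estimate with $\epsilon = r$; this gives $\Psi^{-1}(U)$ open. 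Equivalently, one can argue with sequences: $\Omega$ is metrizable, and sequential convergence $d(\Psi(\eta_k),\Psi(\xi)) \to 0$ implies topological convergence by the definition of open sets. Either route is direct.

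\emph{Surjectivity.} Fix $x \in K$ and build $\xi$ inductively. Since $K = \bigcup_j f_j(K)$, there is $\xi_1$ with $x \in f_{\xi_1}(K)$. Suppose $x \in f_{\xi|_n}(K)$, say $x = f_{\xi|_n}(y)$ with $y \in K$. Write $y \in f_{j}(K)$ for some $j$, set $\xi_{n+1} = j$, and obtain $x \in f_{\xi|_{n+1}}(K)$. Then $x \in \bigcap_n f_{\xi|_n}(K) = \{\Psi(\xi)\}$, so $x = \Psi(\xi)$.

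The main point needing care is the passage from the semimetric estimate to continuity with respect to the semimetric topology, since balls may fail to be open. This is settled directly by the definition of open sets, as above, so no real obstacle is expected; regularity is not even needed for this step.
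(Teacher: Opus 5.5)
Your proof is correct and follows the paper's route: continuity comes from the bound $\textup{diam}(f_{\xi|_n}(K)) \le \phi^n(\textup{diam}(K))$ on cylinders, and surjectivity from the same inductive choice of symbols using $K = \bigcup_j f_j(K)$. The paper phrases continuity through the metric $d_{\Omega}(\xi,\eta) = \exp(-|\xi \wedge \eta|)$, whose balls are exactly your cylinders, and your remark that the $\epsilon$-estimate gives topological continuity directly from the definition of open sets, even though balls need not be open, spells out a step the paper leaves implicit.
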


\begin{proof}
First we show that $\Psi$ is continuous. 
For $\xi = (\xi_i)_{i \ge 1}, \eta = (\eta_i)_{i \ge 1} \in \Omega$, 
let $|\xi \wedge \eta| \coloneqq \min\{i \ge 1 \colon \xi_i \ne \eta_i\}$ and $d_{\Omega} (\xi, \eta) \coloneqq \exp(-|\xi \wedge \eta|)$. 
We remark that $|\xi \wedge \eta| = \infty$ if and only if $\xi = \eta$. 
We assume that $\min \emptyset = \infty$ and $\exp(-\infty) = 0$. 
This defines a metric on $\Omega$ and the metric topology is identical to the product topology. 
For $\xi \ne \eta$, we see that 
\[ d(\Psi(\xi), \Psi(\eta)) \le \textup{diam}\left(f_{\xi|_{n-1}} (K) \right) \le \phi^{n -1}(\textup{diam}(K)) \]
\[ = \phi^{-\log d_{\Omega}(\xi, \eta) -1}(\textup{diam}(K)), \quad n = |\xi \wedge \eta|,  \]
where we let $\xi|_0 \coloneqq \emptyset$ and recall that $f_{\emptyset}$ is the identity map on $X$. 
Since $\phi$ is a comparison function, 
for every $\epsilon > 0$, there exists $\delta > 0$ such that $d_{\Omega}(\xi, \eta) < \delta$ implies $d(\Psi(\xi), \Psi(\eta)) < \epsilon$. 
Hence $\Psi$ is continuous. 

We now show that $\Psi$ is surjective. 
Let $x \in K$. 
We define a sequence $\xi = (\xi_i)_i \in \Omega$ inductively. 
Since $K$ is the attractor, there exists $j  \in \{1, \dots, \ell\}$ such that $x \in f_{j} (K)$, and we let $\xi_1 \coloneqq j$ by choosing one such $j$. 
We choose $x_1 \in K$ such that $x = f_{\xi_1} (x_1)$. 
In the same manner, there exists $j \in \{1, \dots, \ell\}$ such that $x_1 \in f_{j} (K)$, and we let $\xi_2 \coloneqq j$ by choosing one such $j$. 
We choose $x_2 \in K$ such that $x_1 = f_{\xi_2} (x_2)$. 
We repeat this procedure and obtain a sequence $\xi = (\xi_i)_i \in \Omega$. 
 Then, for every $n \ge 1$, $x \in f_{\xi|_{n}} (K)$ and hence $x = \Psi(\xi)$. 
\end{proof}

Let $\iota_{\bf p}$ be the probability measure on $\{1,\dots, \ell\}$ such that $\iota_{\bf p} (\{j\}) = p_j$ for each $j$. 
Let $\nu_{\bf p}$ be the product measure $\iota_{\bf p}^{\otimes \mathbb N}$ on $\Omega$. 
By Lemma \ref{lem:conti-surj}, $\Psi$ is a Borel measurable map. 
Let $\mu_{\bf p}$ be the push-forward measure of $\nu_{\bf p}$ by the map $\Psi$.  
This is a Borel probability measure on $K$. 
We regard it as a probability measure on $X$, extended by zero outside $K$. 

We show that $\mu_{\bf p} = \sum_{j = 1}^{\ell} p_j \mu_{\bf p} \circ f_{j}^{-1}$. 
Let $A$ be a Borel measurable subset of $K$. 
Let $\tilde{\xi} \coloneqq (\xi_{i+1})_{i \ge 1}$ for $\xi = (\xi_i)_{i \ge 1}$.  
Then, $\Psi(\xi) = f_{\xi_1}\left(\Psi (\tilde{\xi}) \right)$.  
Therefore, 
\[ \mu_{\bf p} (A) = \nu_{\bf p}  (\Psi^{-1}(A)) = \sum_{j = 1}^{\ell} \nu_{\bf p}  \left(  \left\{\xi \in \{1, \dots, \ell\}^{\mathbb N}  \colon \xi_1 = j, \, \Psi(\xi) \in A \right\} \right)  \] 
\[ = \sum_{j = 1}^{\ell} \nu_{\bf p}  \left( \left\{\xi  \in \{1, \dots, \ell\}^{\mathbb N} \colon \xi_1 = j, \, \tilde{\xi} \in \Psi^{-1}(f_j^{-1}(A)) \right\}  \right) \]
\[ = \sum_{j = 1}^{\ell} p_j \nu_{\bf p}  \left(\Psi^{-1}(f_j^{-1}(A))  \right) =  \sum_{i=1}^{\ell} p_i \mu_{\bf p} (f_i^{-1} (A)).  \]
Since $f_j (K) \subset K$ for each $j$, $\mu_{\bf p} \circ f_j^{-1}$ vanishes on $K^c$. 
Hence \eqref{eq:invariance-measure} holds for every Borel measurable subset of $X$. 

\begin{Rem}
We have {\it not} shown the uniqueness of the invariant measure satisfying \eqref{eq:invariance-measure}. 
\end{Rem}

\section{Moran--Hutchinson formula}\label{sec:MH-formula}

In this section we establish the Moran--Hutchinson formula for the attractor of a finite family of similitudes. 
As an outline, we follow the strategy in the proof of  \cite[Theorem 2.2.2]{BishopPeres2017}. 
However, we need to modify several parts of the proof.

\begin{Ass}\label{ass:two-additional-metric}
(i) (strong regularity)
\[ \limsup_{\epsilon \to +0} \frac{\Phi_d (\epsilon, \epsilon)}{\epsilon} < \infty.  \]
(ii) (geometric doubling)
There exists $N_0 \in \mathbb{N}$ such that for every $x \in X$ and every $r > 0$, there exist $x_1, \dots, x_{N_0} \in X$ such that $B(x,r) \subset \bigcup_{i=1}^{N_0} B(x_i, r/2)$. 
\end{Ass}

Every quasi-metric space satisfies the strong regularity above. 
The following corresponds to \cite[Lemma 3.3]{BaloghRohner2007}. 

\begin{Lem}\label{lem:number-disjoint}
Suppose Assumption \ref{ass:two-additional-metric} holds. 
Let $b > a > 0$. 
Let $r > 0$. 
Let $W_1, \dots, W_N$ be disjoint subsets of $X$ such that for each $i$, $W_i$ contains a ball of radius $ar$ and is contained in a ball of radius $br$. 
Assume that there exists a subset $A$ of $X$ such that  
$\textup{diam}(A) \le r$ and $A \cap W_i \ne \emptyset$ for each $i \in \{1, \dots, N\}$. 
Then, there exist  $n_0 (a,b) \in \mathbb{N}$ and $R_0 (a,b) > 0$ such that whenever $r \in (0, R_0 (a,b))$ and the above conditions hold, $N \le N_0^{n_0 (a,b)}$. 
\end{Lem}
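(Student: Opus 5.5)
The plan is the usual volume-counting argument for doubling spaces. Since the triangle inequality is unavailable, every use of it is replaced by strong regularity; this is also where the restriction $r < R_0(a,b)$ comes from.

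\textbf{Step 1 (constants).} By Assumption \ref{ass:two-additional-metric}(i), choose $C \ge 1$ and $\epsilon_0 > 0$ such that $\Phi_d(\epsilon,\epsilon) \le C\epsilon$ for $0 < \epsilon < \epsilon_0$. By monotonicity of $\Phi_d$, for $0 \le u, v \le \epsilon < \epsilon_0$ this gives $d(x,z) \le C\epsilon$ whenever $d(x,y) \le u$ and $d(y,z) \le v$.

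\textbf{Step 2 (a ball containing everything).} Fix a point $x_A \in A$. For each $i$, pick $y_i \in A \cap W_i$, and let $z_i$ be a center with $W_i \subset B(z_i, br)$ and $B(w_i, ar) \subset W_i$. Then $d(x_A, y_i) \le r$ and $d(y_i, w_i) < br$ (since $y_i, w_i \in W_i \subset B(z_i,br)$, up to one further application of $\Phi_d$; it is cleaner to bound $d(y_i,z_i)$ and $d(z_i,w_i)$ by $br$ each and take $d(y_i,w_i) \le C br$). Chaining with Step 1 twice, for $r$ small enough that all intermediate radii lie below $\epsilon_0$, yields $d(x_A, w_i) \le C^2 \max(1,b) r =: \beta r$. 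Hence all centers $w_i$ lie in the single ball $B(x_A, \beta' r)$ with $\beta' = \beta + 1$. This fixes $R_0(a,b)$ as roughly $\epsilon_0/(C\max(1,b))$.

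\textbf{Step 3 (doubling count).} Iterate Assumption \ref{ass:two-additional-metric}(ii) $n_0$ times. This covers $B(x_A, \beta' r)$ by $N_0^{n_0}$ balls $B(x_k, 2^{-n_0}\beta' r)$. Choose $n_0 = n_0(a,b)$ so that $2^{-n_0}\beta' < a/C$.

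Each $w_i$ lies in some covering ball $B(x_k, 2^{-n_0}\beta' r)$. Any point $q$ of that ball then satisfies $d(w_i, q) \le \Phi_d(d(w_i,x_k), d(x_k,q)) \le C 2^{-n_0}\beta' r < ar$, provided $2^{-n_0}\beta' r < \epsilon_0$; this is automatic from the choice of $R_0$. So the whole covering ball lies inside $B(w_i, ar) \subset W_i$.

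Since the $W_i$ are disjoint, each covering ball contains centers $w_i$ from at most one index $i$, because its points would otherwise lie in two different $W_i$ simultaneously. The map $i \mapsto k(i)$ is therefore injective, and $N \le N_0^{n_0(a,b)}$.

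\textbf{Main obstacle.} The delicate part is Step 2. Moving from $A$ through $W_i$ to $w_i$ without a triangle inequality requires several nested applications of $\Phi_d$. Each application must have arguments below $\epsilon_0$ so that the linear bound from strong regularity applies, and this is precisely what dictates $R_0(a,b)$. Normality alone would give finite but non-scaling bounds, whereas the argument needs bounds proportional to $r$. I would handle this by bounding the chain crudely, using the monotonicity of $\Phi_d$ and taking the maximum of the radii at each step, so that the constants depend only on $a$, $b$ and $C$.
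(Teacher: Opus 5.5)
Your proposal is correct and follows essentially the paper's argument. Strong regularity places the relevant points in a single ball $B(x_A,\beta' r)$ of radius comparable to $r$; you only need the centers $w_i$ there, while the paper shows all of $W_i$ lies in such a ball. Iterated geometric doubling with $n_0$ chosen so that $\Phi_d$ of the small radii is below $ar$, plus pigeonhole, then gives $N \le N_0^{n_0}$. The remaining difference is cosmetic: you show a whole small covering ball lies in $B(w_i,ar)\subset W_i$, whereas the paper directly derives $d(y_i,y_j)<ar$ for two centers in the same small ball.
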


\begin{proof}
By the assumption, for each $i$, there exist two points $y_i, z_i \in X$ such that $B(y_i, ar) \subset W_i \subset B(z_i, br)$. 

Step 1. We show that there exist $x \in A$  and  two constants $C(b) >  1$ and $R_1 (b) > 0$ such that $W_i \subset B(x, C(b) r)$ for each $i$ and every $r \in (0, R_0 (b))$. 

Take a point $x \in A$. 
Let $x_i \in A \cap W_i$.   
Then, for every $w \in W_i$, 
\[ d(x,w) \le \Phi_d (d(x,x_i), d(x_i,w)) \le \Phi_d \left(r, \Phi_d (br, br)\right)\]
\[  \le  \Phi_d \left(u,  u\right), \quad u = \max\left\{r, \Phi_d (br,br)\right\}. \]
By Assumption \ref{ass:two-additional-metric} (i), 
there exist $R_1 > 0$ and $C_1 > 1$ such that for every $r \in (0, R_1)$, $\Phi_d (r,r) \le C_1 r$. 
The claim follows for $R_1 (b) \coloneqq R_1/(1+ C_1 b)$ and $C(b) \coloneqq C_1 (1+ C_1 b) $. 

Step 2. For every $r \in (0, R_0 (b))$, $B(y_i, ar), 1 \le i \le N$, are disjoint balls  contained in $B(x, C(b) r)$. 
Then there exist $n_0 = n_0 (a,b) \in \mathbb{N}$ and $R_2 (a,b) > 0$ such that for every $r \in (0, R_2 (a,b))$,  $\Phi_d (C(b) r/2^{n_0}, C(b) r/2^{n_0}) < ar$. 

Let $R_0 (a,b) \coloneqq \min\{R_1 (b), R_2 (a,b) \}$. 
Fix $r \in \left(0, R_0 (a,b)\right)$. 
Then by Assumption \ref{ass:two-additional-metric} (ii), 
there exist $v_1, \dots, v_{N_0^{n_0}} \in X$ such that $B(x,C(b)r) \subset \bigcup_{k=1}^{N_0^{n_0}} B(v_k, C(b) r/2^{n_0})$. 
If $N > N_0^{n_0}$, then by the pigeonhole principle, there exist $i \ne j$ and $k$ such that $y_i \in B(v_k, C(b) r/2^{n_0})$ and $y_j \in B(v_k, C(b) r/2^{n_0})$. 
It holds that 
\[  d(y_i, y_j) \le \Phi_d (d(y_i, v_k), d(v_k, y_j)) \le \Phi_d (C(b) r/2^{n_0}, C(b) r/2^{n_0}) < ar. \]
This contradicts the assumption that $B(y_i, ar), 1 \le i \le N$, are disjoint. 
Hence $N \le N_0^{n_0}$. 
\end{proof}

\begin{Rem}
In the above proof, the geometric doubling condition in Assumption \ref{ass:two-additional-metric} (ii) can be weakened to a {\it local} geometric doubling condition: 
there exists $N_0 \in \mathbb{N}$ and $r_0 > 0$ such that for every $x \in X$ and every $r \in (0, r_0)$, there exist $x_1, \dots, x_{N_0} \in X$ such that $B(x,r) \subset \bigcup_{i=1}^{N_0} B(x_i, r/2)$. 
\end{Rem}

\begin{Ass}\label{ass:additional-f}
(i) (open set condition)  There exists a nonempty, bounded open subset $V$ of $X$ such that $\bigcup_j f_j (V) \subset V$ and 
$f_i (V) \cap f_j (V) = \emptyset$ for $i \ne j$. \\
(ii) (surjectivity and similarity) For each $j$, $f_j$ is a surjective map and a similitude with contraction ratio $r_j$, specifically, 
there exists $r_j \in (0,1)$ such that $d(f_j (x_1), f_j (x_2)) = r_j d(x_1, x_2)$ for every $x_1, x_2 \in X$. 
\end{Ass}

If $(X,d)$ is a Euclidean space and $f$ is a similitude on $X = \mathbb{R}^d$ such that $d(f(x_1), f(x_2)) = r d(x_1, x_2)$ for every $x_1, x_2 \in X$, 
then there exists a $d \times d$ orthogonal matrix $O$ and $x_0 \in X$ such that $f(x) = rOx + x_0$ for every $x \in X$. 
In particular $f$ is bijective. 

\begin{Lem}\label{lem:K-included}
Under Assumption \ref{ass:additional-f} (i), $K \subset \overline{V}$. 
\end{Lem}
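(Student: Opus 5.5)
The plan is to show that $\overline{V}$ is invariant under the Hutchinson operator, and then to locate $K$ inside $\overline{V}$ by iterating the maps from a point of $\overline{V}$.

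First, since each $f_j$ is a $\phi_j$-contraction with $\phi_j(t)=r_jt$, it is continuous. For $x \in \overline{V}$, Lemma \ref{lem:seq-closure} gives a sequence $(x_n)_n$ in $V$ with $d(x_n,x)\to 0$. Then $d(f_j(x_n),f_j(x)) = r_j d(x_n,x) \to 0$, and $f_j(x_n) \in V$. Applying Lemma \ref{lem:seq-closure} again gives $f_j(x) \in \overline{V}$, so $f_j(\overline{V}) \subset \overline{V}$ for each $j$.

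Next, fix a point $v \in V$, which exists because $V$ is nonempty. Let $\xi \in \Omega$ be arbitrary and set $y_n \coloneqq f_{\xi|_n}(v)$. Repeated use of $f_j(V)\subset V$ shows $y_n \in V$ for every $n$. I then claim that $d(y_n, \Psi(\xi)) \to 0$. To see this, write $\Psi(\xi) = f_{\xi|_n}(z_n)$ with $z_n \in K$, which is possible because $\Psi(\xi)\in f_{\xi|_n}(K)$. Then
\[ d(y_n,\Psi(\xi)) = r_{\xi_1}\cdots r_{\xi_n}\, d(v,z_n) \le (\max_j r_j)^n \sup_{z\in K} d(v,z). \]
The supremum is finite: the set $K\cup\{v\}$ is bounded, since $K$ is compact (hence bounded) and adding a point keeps it bounded under normality. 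Formally, $\textup{diam}(K\cup\{v\}) \le \Phi_d(d(v,k_0),\textup{diam}(K))<\infty$ for any fixed $k_0\in K$, using Lemma \ref{lem:bounded-basic}. Hence $\Psi(\xi) \in \overline{V}$ by Lemma \ref{lem:seq-closure}.

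Since $\Psi$ is surjective onto $K$ by Lemma \ref{lem:conti-surj}, this gives $K \subset \overline{V}$. An alternative route would be to note that $\overline{V}\cap$(something compact) is invariant and to appeal to uniqueness in Theorem \ref{thm:attractor}, but $\overline{V}$ need not be compact, so the direct coding argument above is cleaner.

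The only mildly delicate point is the finiteness of $\sup_{z\in K} d(v,z)$ in the absence of a triangle inequality, which is handled by the normal property through the basic triangle function as indicated. The first step ($f_j(\overline{V})\subset\overline{V}$) is not strictly needed for this route.
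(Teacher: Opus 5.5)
Your proof is correct and is essentially the paper's argument. Like the paper, you fix a point $v_0\in V$, write $x=\Psi(\xi)$ using surjectivity of $\Psi$, note that $f_{\xi|_n}(v_0)\in V$ with $d(x,f_{\xi|_n}(v_0))\to 0$, and conclude with Lemma \ref{lem:seq-closure}; the paper even includes the same unused invariance step $f_j(\overline{V})\subset\overline{V}$.

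One small point: the lemma assumes only Assumption \ref{ass:additional-f} (i), so the $f_j$ need not be similitudes. The paper therefore bounds $d(x,f_{\xi|_n}(v_0))$ by $\phi^n(M)$, with $\phi=\max_j\phi_j$ and $M=\sup_{z\in K}d(z,v_0)$, rather than by $(\max_j r_j)^n M$. Your argument goes through unchanged with this replacement, since it needs only that $\phi$ is nondecreasing and $\phi^n(M)\to 0$.
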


\begin{proof}
Since $f_j$ is continuous, $f_j (\overline{V}) \subset \overline{f_j (V)}$. 
Hence  
\[ \bigcup_j f_j (\overline{V}) \subset \bigcup_j \overline{f_j (V)} \subset \overline{V}.\]  
Since $V$ is not empty, we can take a point $v_0 \in V$. 
Since $K$ is compact, by recalling that $(X,d)$ is normal, $M \coloneqq \sup_{x \in K} d(x, v_0) < \infty$. 
Let $x \in K$. 
Then  there exists $\xi \in \{1,\dots, \ell\}^{\mathbb N}$ such that $\Psi(\xi) = x$. 
For every $n$, $x \in f_{\xi|_n} (K)$. 
Hence, $d(x,  f_{\xi|_n}(v_0)) \le \phi^n (M)$. 
Since $ f_{\xi|_n}(v_0) \in V$ for every $n$, we see that $x \in \overline{V}$. 
By \eqref{eq:closure-limit}, we see that $K \subset \overline{V}$. 
\end{proof}

Let $r_{\min} \coloneqq \min_j r_j \in  (0,1), r_{\max} \coloneqq \max_j r_j \in (0,1)$. 
Let $\{1,\dots, \ell\}^{*} \coloneqq \bigcup_{n \ge 0} \{1,\dots, \ell\}^{n}$, where we let $\{1,\dots,\ell\}^{0} \coloneqq \{\emptyset\}$. 
We let $r_{\emptyset} \coloneqq 1$ and $f_{\emptyset}$ be the identity map of $X$. 
For $\sigma = (\sigma_i)_{i=1}^{m} \in  \{1,\dots, \ell\}^{*}$, 
let $|\sigma| \coloneqq m$, $r_{\sigma} \coloneqq r_{\sigma_1} \cdots r_{\sigma_m}$, $p_{\sigma} \coloneqq p_{\sigma_1} \cdots p_{\sigma_m}$ and $f_{\sigma} \coloneqq f_{\sigma_1} \circ \cdots \circ f_{\sigma_m}$. 

\begin{Lem}\label{lem:ball-surjection}
Suppose Assumption \ref{ass:additional-f} (ii) holds. 
Then, for every $\sigma \in \{1,\dots, \ell\}^*$, every $\delta > 0$ and every $x \in X$, 
$B(f_{\sigma}(x), \delta r_{\sigma}) = f_{\sigma}(B(x,\delta))$. 
\end{Lem}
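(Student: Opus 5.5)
We first reduce to the case of a single map and then induct on $|\sigma|$. If $\sigma = \emptyset$, then $r_\emptyset = 1$ and $f_\emptyset$ is the identity, so there is nothing to prove. For a single index $j$, we prove $B(f_j(x), \delta r_j) = f_j(B(x,\delta))$ by a double inclusion.

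For the inclusion $f_j(B(x,\delta)) \subset B(f_j(x), \delta r_j)$, take $y \in B(x,\delta)$. The similitude property in Assumption \ref{ass:additional-f} (ii) gives $d(f_j(x), f_j(y)) = r_j d(x,y) < r_j \delta$. For the reverse inclusion, take $z \in B(f_j(x), \delta r_j)$. Here we use surjectivity of $f_j$: there is $y \in X$ with $f_j(y) = z$. Then $r_j d(x,y) = d(f_j(x), z) < \delta r_j$, and since $r_j > 0$ we get $d(x,y) < \delta$. Hence $y \in B(x,\delta)$ and $z = f_j(y) \in f_j(B(x,\delta))$.

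For general $\sigma = (\sigma_1,\dots,\sigma_m)$, write $f_\sigma = f_{\sigma_1} \circ f_{\sigma'}$ with $\sigma' = (\sigma_2,\dots,\sigma_m)$, so that $r_\sigma = r_{\sigma_1} r_{\sigma'}$. Applying the single-map case with radius $\delta r_{\sigma'}$ at the point $f_{\sigma'}(x)$, and then the induction hypothesis, gives
\[ B(f_\sigma(x), \delta r_\sigma) = f_{\sigma_1}\bigl(B(f_{\sigma'}(x), \delta r_{\sigma'})\bigr) = f_{\sigma_1}\bigl(f_{\sigma'}(B(x,\delta))\bigr) = f_\sigma(B(x,\delta)). \]
Alternatively, one can argue directly, since $f_\sigma$ is itself a surjective similitude with ratio $r_\sigma$: a composition of surjections is surjective, and the ratios multiply.

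There is no real obstacle here. The only point needing care is the inclusion $B(f_j(x),\delta r_j) \subset f_j(B(x,\delta))$, and this is exactly where surjectivity is needed. Without it, the image of a ball could be a proper subset of the corresponding ball, as in the non-bijective example of Rajala and Vilppolainen.
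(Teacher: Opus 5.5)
Your proof is correct and is essentially the paper's argument. The paper does not induct on $|\sigma|$; it applies your double-inclusion argument directly to $f_\sigma$, viewed as a surjective similitude with ratio $r_\sigma$, which is exactly your ``alternatively'' remark (note also that the paper uses $\sigma'$ for $\sigma$ with its \emph{last} letter removed, not its first).
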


By this lemma, $f_j$ is an open map and hence $f_j$ is a homeomorphism of $X$ onto itself. 

\begin{proof}
Let $y \in B(f_{\sigma}(x), \delta r_{\sigma})$. 
Since $f_{\sigma}$ is surjective, 
there exists $z$ such that $y = f_{\sigma}(z)$. 
We see that $\delta r_{\sigma} > d(f_{\sigma}(x), f_{\sigma}(z)) = r_{\sigma} d(x,z)$.
Hence $d(x,z) < \delta$. 
Hence $B(f_{\sigma}(x), \delta r_{\sigma}) \subset f_{\sigma}(B(x,\delta))$. 
Let $z \in B(x,\delta)$. 
Then $d(f_{\sigma}(x), f_{\sigma}(z)) = r_{\sigma} d(x,z) < r_{\sigma} \delta$. 
\end{proof}

We say that $\Pi \subset \{1,\dots, \ell\}^{*}$ is a {\it cut-set} if for every $\xi \in \{1,\dots, \ell\}^{\mathbb N}$ there exists $\sigma \in \Pi$ such that $\sigma$ is a prefix of $\xi$. 
We say that a cut-set is {\it minimal} if there exists no element in the cut-set which is a prefix of another element. 
Then, by \eqref{eq:invariance-measure}, 
we obtain that for every minimal cut-set $\Pi$, 
\begin{equation}\label{eq:invariance-min-cutset}
\mu_{\bf p} = \sum_{\sigma \in \Pi} p_{\sigma} \mu_{\bf p} \circ f_{\sigma}^{-1}.
\end{equation}
See \cite[Lemma 2.2.4 (ii)]{BishopPeres2017}. 

For $\rho \in (0, r_{\min})$, 
let 
\begin{equation}\label{eq:def-cutset-1} 
\Pi_{\rho} \coloneqq \left\{\sigma \in \{1,\dots, \ell\}^{*} \colon r_{\sigma} \le \rho < r_{\sigma^{\prime}} \right\}, 
\end{equation}
where $\sigma^{\prime} \coloneqq (\sigma_1, \dots, \sigma_{|\sigma|-1})$. 
Since $r_{\max} < 1$, for every $\xi \in \{1,\dots, \ell\}^{\mathbb N}$, 
$\lim_{n \to \infty} r_{\xi|_n} = 0$. 
Hence there exists a unique $n$ such that $r_{\xi|_n} \le \rho < r_{\xi|_{n-1}}$, and then $\xi|_n \in \Pi_{\rho}$. 
Let $\sigma, \tau \in \Pi_{\rho}$, and suppose that $\sigma$ is a prefix of $\tau$. 
Then $|\sigma| \le |\tau|$. 
If $|\sigma| < |\tau|$, then $r_{\tau^{\prime}} \le r_{\sigma}$. 
This cannot occur. 
Hence, $|\sigma| = |\tau|$ and then $\sigma = \tau$. 
Thus we see that $\Pi_{\rho}$ is a minimal cut-set.

\begin{Prop}\label{prop:mass-dist}
Suppose that Assumptions \ref{ass:two-additional-metric} and \ref{ass:additional-f}  hold. 
Let $\alpha$ be a positive real number such that $\sum_j r_j^{\alpha} = 1$. 
Let $p_j \coloneqq r_j^{\alpha}$ for each $j$ and ${\bf p} \coloneqq (p_j)_j$. 
Then there exist $C_0 \in (0,\infty)$ and $\delta_0 > 0$ such that 
$\mu_{\bf p}(U) \le C_0 \rho^{\alpha}$ for every $\rho \in (0, \delta_0)$ and every open set $U$ with $\textup{diam}(U) \le \rho$. 
\end{Prop}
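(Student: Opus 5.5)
The plan follows the proof of \cite[Theorem 2.2.2]{BishopPeres2017}: use the invariance \eqref{eq:invariance-min-cutset} over the minimal cut-set $\Pi_{\rho}$, and bound the number of cylinders that can meet $U$ by Lemma \ref{lem:number-disjoint}.

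\textbf{Setup.} Let $V$ be the open set from Assumption \ref{ass:additional-f} (i). Since $V$ is nonempty and open, fix $v_0 \in V$ and $a_0 > 0$ with $B(v_0, a_0) \subset V$. Since $V$ is bounded, fix $z_0$ and $b_0 > 0$ with $V \subset B(z_0, b_0)$. Enlarging $b_0$ if necessary, we may assume $\overline{V} \subset B(z_0,b_0)$. This is possible because, by regularity and Lemma \ref{lem:seq-closure}, every point of $\overline V$ is within $\Phi_d(b_0,1)<\infty$ of $z_0$ (normality).

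\textbf{Step 1: the sets $f_\sigma(V)$, $\sigma \in \Pi_\rho$, are pairwise disjoint.} Let $\sigma \ne \tau$ in $\Pi_\rho$ and let $\omega$ be their longest common prefix. Neither word is a prefix of the other (minimality), so $\sigma = \omega i \cdots$ and $\tau = \omega j \cdots$ with $i \ne j$. By the open set condition, $f_i(V) \cap f_j(V) = \emptyset$ and $f_\cdot(V) \subset V$. Since $f_\omega$ is injective (it scales distances by $r_\omega > 0$), it follows that $f_\sigma(V) \cap f_\tau(V) = \emptyset$.

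\textbf{Step 2: each $W_\sigma \coloneqq f_\sigma(V)$ contains and is contained in balls of comparable radius.} Lemma \ref{lem:ball-surjection} gives
\[ B(f_\sigma(v_0), a_0 r_\sigma) \subset W_\sigma \subset B(f_\sigma(z_0), b_0 r_\sigma). \]
For $\sigma \in \Pi_\rho$ we have $r_{\min}\rho < r_\sigma \le \rho$. Hence $W_\sigma$ contains a ball of radius $a\rho$ with $a = a_0 r_{\min}$, and is contained in a ball of radius $b\rho$ with $b = \max\{b_0, 2a\}$, so that $b>a$.

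\textbf{Step 3: count and estimate.} Apply \eqref{eq:invariance-min-cutset} with $\Pi = \Pi_\rho$:
\[ \mu_{\bf p}(U) = \sum_{\sigma \in \Pi_\rho} p_\sigma \, \mu_{\bf p}(f_\sigma^{-1}(U)). \]
Because $\mu_{\bf p}$ is supported on $K \subset \overline V$ (Lemma \ref{lem:K-included}), a term is nonzero only if $U \cap f_\sigma(\overline V) \ne \emptyset$.

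This is the main obstacle. Lemma \ref{lem:number-disjoint} needs $U$ to intersect the disjoint sets $W_\sigma = f_\sigma(V)$, whereas we only know $U$ meets $f_\sigma(\overline V) = \overline{W_\sigma}$ (using that $f_\sigma$ is a homeomorphism). This is where openness of $U$ is used. Take $y \in U \cap \overline{W_\sigma}$. Since $U$ is open, some ball $B(y,\epsilon) \subset U$, and by Lemma \ref{lem:seq-closure} there are points of $W_\sigma$ arbitrarily close to $y$, hence inside $U$. Thus $U \cap W_\sigma \ne \emptyset$.

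Now apply Lemma \ref{lem:number-disjoint} with $A = U$, $r = \rho$ and the $W_\sigma$ above. For $\rho < \delta_0 \coloneqq \min\{R_0(a,b), r_{\min}\}$, the number of relevant $\sigma$ is at most $N_0^{n_0(a,b)}$. Each term is at most $p_\sigma = r_\sigma^\alpha \le \rho^\alpha$, which gives
\[ \mu_{\bf p}(U) \le N_0^{n_0(a,b)} \rho^\alpha, \]
so we may take $C_0 = N_0^{n_0(a,b)}$.

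The remaining routine checks are that $\Pi_\rho$ is defined (it requires $\rho < r_{\min}$) and that the constants $a,b$ do not depend on $\rho$.
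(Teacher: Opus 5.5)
Your proposal is correct and follows essentially the same route as the paper: invariance over the minimal cut-set $\Pi_\rho$, the use of openness of $U$ to upgrade $U\cap f_\sigma(\overline V)\neq\emptyset$ to $U\cap f_\sigma(V)\neq\emptyset$, and Lemma \ref{lem:number-disjoint} applied to the disjoint sets $f_\sigma(V)$ with inner radius proportional to $r_{\min}\rho$. Your choice $b=\max\{b_0,2a\}$, which guarantees $b>a$, and your use of $R_0(a,b)$ in place of the paper's $R_0(b)$ are small tidy-ups of the constants; enlarging $b_0$ so that the ball also covers $\overline V$ is harmless but not needed.
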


\begin{proof}
Let $b \coloneqq 1 + \textup{diam}(V)$. 
Let $\rho \in \left(0, \min\{R_0 (b), r_{\min} \}\right)$. 
By \eqref{eq:invariance-min-cutset}, we obtain that 
\[ \mu_{\bf p}(U) =  \sum_{\sigma \in \Pi_{\rho}} r_{\sigma}^{\alpha} \mu_{\bf p} \left( f_{\sigma}^{-1} (U) \right) \le \rho^{\alpha} \left|\left\{\sigma \in \Pi_{\rho} \colon \mu_{\bf p} \left( f_{\sigma}^{-1} (U) \right) > 0 \right\}\right|.  \]
If $\mu_{\bf p} \left( f_{\sigma}^{-1} (U) \right) > 0$, then $f_{\sigma}^{-1} (U) \cap K \ne \emptyset$. 
By Lemma \ref{lem:K-included}, $f_{\sigma}^{-1} (U) \cap \overline{V} \ne \emptyset$. 
Since $U$ is open and $f_{\sigma}$ is continuous, 
$f_{\sigma}^{-1} (U) \cap V \ne \emptyset$, and hence $U \cap f_{\sigma}(V) \ne \emptyset$. 

Since $\Pi_{\rho}$ is a minimal cut-set, by the open set condition, 
$\{ f_{\sigma}(V) \}_{\sigma \in \Pi_{\rho}}$ are disjoint. 
Since $V$ is a nonempty bounded open set, 
there exist $z \in V$ and $\delta > 0$ such that 
$B(z,\delta) \subset V \subset B\left(z, 1+\textup{diam}(V) \right)$. 
We remark that by the normal property, $\textup{diam}(V) < \infty$.  
By Lemma \ref{lem:ball-surjection}, 
\[ B\left(f_{\sigma}(z), \rho \delta r_{\min}\right) \subset B(f_{\sigma}(z), \delta r_{\sigma}) \subset f_{\sigma}(V) \subset B\left(f_{\sigma}(z), \rho(1+ \textup{diam}(V))\right).\] 
 We apply Lemma \ref{lem:number-disjoint} to the case where $a = \delta r_{\min}$ and $b = 1+\textup{diam}(V)$ and $r = \rho$. 
 Then, $|\{\sigma \in \Pi_{\rho} \colon U \cap f_{\sigma}(V) \ne \emptyset\}| \le N_0^{n_0 (a,b)}$. 
 Now the assertion holds for $C_0 =  N_0^{n_0 (a,b)}$ and $\delta_0 = \min\{R_0 (b), r_{\min} \}$. 
\end{proof}

\begin{Cor}\label{cor:non-atomic}
Under the assumptions of Proposition \ref{prop:mass-dist}, $\mu_{\bf p}$ is non-atomic.
\end{Cor}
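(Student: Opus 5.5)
To show that $\mu_{\bf p}(\{x\}) = 0$ for every $x \in X$, the plan is to trap the singleton inside small open sets of small diameter and apply Proposition \ref{prop:mass-dist}. Fix $x \in X$ and $\rho \in (0,\delta_0)$. We want an open set $U$ with $x \in U$ and $\textup{diam}(U) \le \rho$. A natural candidate is $\textup{int}(B(x,s))$ for a suitable $s > 0$, which is open by Lemma \ref{lem:interior-open} (i).

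First we check that this set contains $x$. By Lemma \ref{lem:interior-open} (ii) there is $\eta > 0$ with $B(x,\eta) \subset \textup{int}(B(x,s))$, and $x \in B(x,\eta)$. Next we bound its diameter. Since $\textup{int}(B(x,s)) \subset B(x,s)$, for $y,z$ in it we have
\[ d(y,z) \le \Phi_d(d(y,x),d(x,z)) \le \Phi_d(s,s). \]
By the regular property, $\Phi_d(s,s) \to 0$ as $s \to +0$. Alternatively, strong regularity (Assumption \ref{ass:two-additional-metric} (i)) gives $\Phi_d(s,s) \le C_1 s$ for small $s$. Either way we may choose $s$ so small that $\Phi_d(s,s) \le \rho$, and then $U \coloneqq \textup{int}(B(x,s))$ is an open set with $\textup{diam}(U) \le \rho$.

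Proposition \ref{prop:mass-dist} now gives
\[ \mu_{\bf p}(\{x\}) \le \mu_{\bf p}(U) \le C_0 \rho^{\alpha}. \]
Since $\alpha > 0$, letting $\rho \to +0$ yields $\mu_{\bf p}(\{x\}) = 0$. Singletons are closed because the space is Hausdorff, so $\{x\}$ is Borel and the measure is defined on it.

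The only point needing care is that a ball itself need not be open, so we cannot apply the proposition to $B(x,s)$ directly. Passing to the interior via Lemma \ref{lem:interior-open} handles this, and the rest is routine.
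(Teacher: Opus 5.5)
Your proof is correct and follows essentially the same route as the paper. Both arguments cover $x$ by the open set $\textup{int}(B(x,s))$, bound its diameter by $\Phi_d(s,s)$, apply Proposition \ref{prop:mass-dist}, and let the scale tend to zero. Your remark that plain regularity already gives $\Phi_d(s,s) \le \rho$ is accurate, so the strong regularity the paper invokes is not needed at this step, and parametrizing by $\rho$ directly keeps the diameter below $\delta_0$.
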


\begin{proof}
Let $x \in X$. 
By Assumption \ref{ass:two-additional-metric} (i), 
there exist $C_1 > 0$ and $\delta_1 \in (0, \delta_0)$ such that for every $\delta \in (0, \delta_1)$, 
$\textup{diam}(\textup{int}(B(x,\delta))) \le \Phi_d (\delta, \delta) \le C_1 \delta$. 
By Lemma \ref{lem:interior-open} (i), $\textup{int}(B(x,\delta))$ is open.
By Lemma \ref{lem:interior-open} (ii), there exists $\eta > 0$ such that $B(x,\eta) \subset \textup{int}(B(x,\delta))$. 
Then by Proposition \ref{prop:mass-dist}, 
\[ \mu_{\bf p}(\{x\}) \le \mu_{\bf p} \left( \textup{int}(B(x,\delta)) \right) \le C_0 (C_1 \delta)^{\alpha}.   \]
By letting $\delta \to +0$, we have the assertion. 
\end{proof}

We remark that a ball may not be a Borel set, so $\mu_{\bf p}(B(x,\eta))$ might not be well-defined. 

We need the following lemma since a ball may not be open. 

\begin{Lem}\label{lem:large-open}
There exist $C_2 > 0$ and $\delta_2 \in (0, \delta_1)$  such that for every subset $A$ of $X$ with $0 < \textup{diam}(A) < \delta_2$, 
there exists an open subset $U$ such that $A \subset U$ and $ \textup{diam}(U) \le C_2 \textup{diam}(A)$. 
\end{Lem}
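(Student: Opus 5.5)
The plan is to take $U$ to be a union of interiors of balls centred at the points of $A$, with radius comparable to $\textup{diam}(A)$. Two facts drive this: interiors are open by Lemma \ref{lem:interior-open} (i), and strong regularity (Assumption \ref{ass:two-additional-metric} (i)) turns radii into diameters up to a constant. Set $t \coloneqq \textup{diam}(A) > 0$ and define
\[ U \coloneqq \bigcup_{x \in A} \textup{int}\left(B(x, 2t)\right). \]
Any union of open sets is open, so $U$ is open.

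The first step is to check that $A \subset U$, which needs $x \in \textup{int}(B(x,2t))$ for each $x \in A$. Here I would invoke Lemma \ref{lem:interior-open} (ii). It gives $\delta > 0$ with $B(x,\delta) \subset \textup{int}(B(x,2t))$, and $x \in B(x,\delta)$ since $d(x,x) = 0 < \delta$. In fact more is immediate: $B(x,2t) \subset B(x,2t)$ directly, so $x$ lies in the interior by definition. Either way, every $x \in A$ belongs to $U$.

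The second step is the diameter bound. Take $u \in \textup{int}(B(x,2t))$ and $v \in \textup{int}(B(y,2t))$ with $x,y \in A$; then $d(u,x) < 2t$, $d(x,y) \le t$ and $d(y,v) < 2t$. Applying the basic triangle function twice and using monotonicity,
\[ d(u,v) \le \Phi_d\left(d(u,x), \Phi_d(d(x,y), d(y,v))\right) \le \Phi_d\left(2t, \Phi_d(2t,2t)\right). \]
By strong regularity there are $R_1 > 0$ and $C_1 \ge 1$ with $\Phi_d(s,s) \le C_1 s$ for $s \in (0,R_1)$. Choose $\delta_2 < \delta_1$ small enough that $2C_1\delta_2 < R_1$. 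Then for $t < \delta_2$ we get $\Phi_d(2t,2t) \le 2C_1 t$, and by monotonicity
\[ \Phi_d\left(2t, 2C_1 t\right) \le \Phi_d(2C_1 t, 2C_1 t) \le 2C_1^2 t. \]
Hence $\textup{diam}(U) \le C_2\, \textup{diam}(A)$ with $C_2 \coloneqq 2C_1^2$.

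No step is a real obstacle. The only point needing care is that balls themselves need not be open, which is exactly why the construction uses interiors. The remaining bookkeeping is ordering the constants: fix $\delta_2$ from $R_1$ and $\delta_1$ first, then take $C_2 \coloneqq 2C_1^2$, so both are independent of $A$.
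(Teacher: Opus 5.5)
Your proof is correct, but it builds $U$ differently from the paper. The paper fixes a single point $x \in A$ and notes $A \subset B(x, 2\,\textup{diam}(A))$. It then reruns the argument of Lemma \ref{lem:interior-open} (i) with strong regularity to get $B(x, 2\,\textup{diam}(A)) \subset \textup{int}(B(x, 4C_1\,\textup{diam}(A)))$. It takes $U$ to be this one interior, so the diameter bound is a single application of $\Phi_d$ and gives $C_2 = 4C_1^2$.

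You instead take $U$ to be the union of $\textup{int}(B(x,2t))$ over all $x \in A$. Containment $A \subset U$ then becomes trivial, since each point is the centre of its own ball and so lies in its interior by definition. Openness comes directly from Lemma \ref{lem:interior-open} (i). The cost moves into the diameter estimate, where you apply the triangle function twice through $x$ and $y$. Strong regularity is still used twice, and you get $C_2 = 2C_1^2$.

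Yours avoids showing that an enlarged ball contains the smaller one in its interior, while the paper's keeps $U$ a single set around one centre. Your normalisation $C_1 \ge 1$, which you need for $2t \le 2C_1 t < R_1$, is harmless because the constant from strong regularity can always be enlarged.
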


\begin{proof}
Assume that $0 < \textup{diam}(A) < \infty$. 
Since $A \ne \emptyset$, we can take a point $x \in A$. 
Then, by using $\textup{diam}(A) > 0$, we obtain that $A \subset B(x, 2\textup{diam}(A))$. 
Take $C_1 > 0$ and $\delta_1$ as in the proof of Corollary \ref{cor:non-atomic}. 

If $0 < \textup{diam}(A) < \delta_1 / (4(C_1 + 1))$, then, $\Phi_d (2\textup{diam}(A), 2\textup{diam}(A)) \le 2C_1 \textup{diam}(A)$. 
By using the same argument as in the proof of Lemma \ref{lem:interior-open} (i), 
$B(x, 2\textup{diam}(A)) \subset \textup{int} \left(B(x, 4C_1 \textup{diam}(A)) \right)$. 
Then 
\[ \textup{diam}\left(  \textup{int} \left(B(x, 4C_1 \textup{diam}(A)) \right) \right) \le \Phi_d (4C_1 \textup{diam}(A),  4C_1 \textup{diam}(A)) \le 4C_1^2 \textup{diam}(A). \]
Now the assertion holds for $\delta_2 =  \delta_1 / (4(C_1 + 1))$, $C_2 = 4C_1^2$, and $U =  \textup{int} \left(B(x, 4C_1 \textup{diam}(A)) \right)$. 
\end{proof}

\begin{Thm}\label{thm:main}
Suppose that  Assumptions \ref{ass:two-additional-metric} and \ref{ass:additional-f}  hold. 
Let $K$ be the attractor of $\{f_j\}_j$. 
Let $\alpha$ be the similarity dimension of $\{f_j\}_j$, specifically,  a positive real number such that $\sum_j r_j^{\alpha} = 1$. 
Then $0 < \mathcal{H}^{\alpha}(K) < \infty$ and $\dim_H K = \dim_M K = \alpha$. 
\end{Thm}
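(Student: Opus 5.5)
We prove the upper bound $\mathcal{H}^{\alpha}(K) < \infty$ (together with $\overline{\dim}_M K \le \alpha$) by the natural covers coming from the minimal cut-sets $\Pi_\rho$. The lower bound $\mathcal{H}^{\alpha}(K) > 0$ comes from the mass distribution principle, using Proposition \ref{prop:mass-dist} and Lemma \ref{lem:large-open}. The dimension equalities then follow from the chain $\dim_H K \le \underline{\dim}_M K \le \overline{\dim}_M K$.

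\emph{Upper bound.} For $\rho \in (0, r_{\min})$, since $K = \bigcup_j f_j(K)$ and $\Pi_\rho$ is a minimal cut-set, iteration gives $K = \bigcup_{\sigma \in \Pi_\rho} f_\sigma(K)$. Each piece satisfies $\textup{diam}(f_\sigma(K)) = r_\sigma \textup{diam}(K) \le \rho\, \textup{diam}(K)$, and $\textup{diam}(K) < \infty$ by Lemma \ref{lem:bounded-basic}. Moreover $\sum_{\sigma \in \Pi_\rho} r_\sigma^\alpha = 1$; this is \eqref{eq:invariance-min-cutset} evaluated on $X$, or the direct identity for minimal cut-sets. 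Hence $\mathcal{H}^\alpha_{\delta}(K) \le \textup{diam}(K)^\alpha$ once $\rho\,\textup{diam}(K) < \delta$, and so $\mathcal{H}^\alpha(K) \le \textup{diam}(K)^\alpha < \infty$.

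For the Minkowski dimension, each $\sigma \in \Pi_\rho$ has $r_\sigma > \rho r_{\min}$. Summing $r_\sigma^\alpha$ over $\Pi_\rho$ then gives $|\Pi_\rho| \le (r_{\min}\rho)^{-\alpha}$. Since $N(K, 2\rho\,\textup{diam}(K)) \le |\Pi_\rho|$, we obtain $\overline{\dim}_M K \le \alpha$. (If $\textup{diam}(K) = 0$, i.e.\ $K$ is a point, the lower bound below gives a contradiction, so this case does not occur.)

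\emph{Lower bound.} Let $\mu = \mu_{\bf p}$ with $p_j = r_j^\alpha$, as in Proposition \ref{prop:mass-dist}. Take any countable cover $\{B_i\}$ of $K$ with $\textup{diam}(B_i) < \delta$, where $\delta$ is small. We may discard the sets meeting $K$ in at most one point with diameter $0$; there are countably many of them, and by Corollary \ref{cor:non-atomic} they carry no $\mu$-mass. For each remaining $B_i$ with $0 < \textup{diam}(B_i) < \delta_2$, Lemma \ref{lem:large-open} gives an open $U_i \supset B_i$ with $\textup{diam}(U_i) \le C_2 \textup{diam}(B_i)$. Choosing $\delta$ so small that $C_2\delta < \delta_0$, Proposition \ref{prop:mass-dist} yields $\mu(U_i) \le C_0 C_2^\alpha \textup{diam}(B_i)^\alpha$. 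The $U_i$ are open and hence Borel, which matters since the $B_i$ themselves need not be measurable. We then get
\[ 1 = \mu(K) \le \sum_i \mu(U_i) + \mu(\text{countable set}) \le C_0 C_2^\alpha \sum_i \textup{diam}(B_i)^\alpha. \]
Thus $\mathcal{H}^\alpha_\delta(K) \ge (C_0 C_2^\alpha)^{-1}$ for all small $\delta$, so $\mathcal{H}^\alpha(K) > 0$.

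\emph{Conclusion and main obstacle.} From $0 < \mathcal{H}^\alpha(K) < \infty$ we get $\dim_H K = \alpha$. Combining with $\dim_H K \le \underline{\dim}_M K \le \overline{\dim}_M K \le \alpha$ gives $\dim_M K = \alpha$.

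The main obstacle is the measurability in the lower bound: covering sets and balls need not be Borel, so we cannot apply $\mu$ to them directly. Lemma \ref{lem:large-open} handles this by replacing each set with an open set of comparable diameter, and Corollary \ref{cor:non-atomic} handles the degenerate diameter-zero sets. The remaining step to check is that the constants $\delta_0, \delta_2$ can be chosen uniformly, which they can.
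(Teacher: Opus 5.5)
Your proposal is correct and follows the paper's proof. The lower bound is the same modified mass distribution argument: Lemma \ref{lem:large-open} passes to open sets, Corollary \ref{cor:non-atomic} disposes of the diameter-zero sets, and Proposition \ref{prop:mass-dist} bounds the mass; the upper bound uses the same self-similar covering with $\sum r_\sigma^\alpha = 1$, except that you cover directly by $\{f_\sigma(K)\}_{\sigma \in \Pi_\rho}$ instead of pushing forward an arbitrary cover as in the proof of Lemma \ref{lem:open-Haus-basic} (which gives the slightly sharper $\mathcal{H}^\alpha(K) = \mathcal{H}^\alpha_\infty(K)$), and you write out the bound $\overline{\dim}_M K \le \alpha$ that the paper delegates to Bishop--Peres.
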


\begin{proof}
We show that $ \mathcal{H}^{\alpha}(K) > 0$. 
The following arguments correspond to the proof of the mass distribution principle, but we need several modifications. 

Let $\delta \in (0, \delta_2 /(C_2 + 1))$. 
Assume that $K \subset \bigcup_{i \ge 1} A_i$ and that $\textup{diam}(A_i) < \delta$ for every $i$. 
Let $\mathcal{I} \coloneqq \{i \ge 1 \colon \textup{diam}(A_i) = 0\}$.   
By Lemma \ref{lem:large-open}, 
it holds that for every $i \notin \mathcal{I}$, there exists an open set $U_i$ such that $A_i \subset U_i$ and $\textup{diam}(U_i) \le C_2 \textup{diam}(A_i)$. 
Since 
\[ K \setminus \bigcup_{i \in \mathcal{I}} A_i \subset  \bigcup_{i \ge 1, i \notin \mathcal{I}} A_i \subset  \bigcup_{i \ge 1, i \notin \mathcal{I}} U_i, \]
we obtain that 
\[ \mu_{\bf p}\left( K \setminus \bigcup_{i \in \mathcal{I}} A_i  \right) \le \sum_{i \ge 1, i \notin \mathcal{I}} \mu_{\bf p}(U_i). \]

We remark that for every $i \in \mathcal{I}$, $A_i$ is empty or a one-point set. 
By Corollary \ref{cor:non-atomic}, $ \mu_{\bf p}\left( K \setminus \bigcup_{i \in \mathcal{I}} A_i  \right)  =  \mu_{\bf p}\left( K \right)  = 1$. 
Since $ \textup{diam}(U_i) \le C_2 \textup{diam}(A_i) < \delta_2 < \delta_0$, 
we can apply Proposition \ref{prop:mass-dist} and obtain that for $i \notin \mathcal{I}$, 
\[ \mu_{\bf p}(U_i) \le C_0  \textup{diam}(U_i) ^{\alpha} \le C_0 C_2^{\alpha} \textup{diam}(A_i)^{\alpha}.\] 
Then, 
\[ \sum_{i \ge 1} \textup{diam}(A_i)^{\alpha} = \sum_{i \ge 1, i \notin \mathcal{I}} \textup{diam}(A_i)^{\alpha} \ge \frac{1}{C_0 C_2^{\alpha}} \sum_{i \ge 1, i \notin \mathcal{I}} \mu_{\bf p}(U_i) \ge  \frac{1}{C_0 C_2^{\alpha}}.  \]
Hence $\mathcal{H}^{\alpha}_{\delta}(K) \ge  \frac{1}{C_0 C_2^{\alpha}}$ for every $\delta \in (0,\delta_2 /(C_2 + 1))$ and we obtain that $\mathcal{H}^{\alpha}(K) \ge  \frac{1}{C_0 C_2^{\alpha}}$. 

We see that $\mathcal{H}^{\alpha}(K) = \mathcal{H}^{\alpha}_{\infty}(K)  < \infty$ in the same manner as in the proof of Lemma \ref{lem:open-Haus-basic} below. 
Furthermore, we also see that $\overline{\dim}_M K \le \alpha$ by following the proof of \cite[Theorem 2.2.2 (ii)]{BishopPeres2017}. 
\end{proof}

\begin{Rem}
Conversely, if  Assumptions \ref{ass:two-additional-metric} and \ref{ass:additional-f}  hold and $0 < \mathcal{H}^{\beta}(K) < \infty$ for some $\beta > 0$, 
then $\sum_{j} r_j^{\beta} \ge 1$. 
If, in addition, $\mathcal{H}^{\beta}(f_k (K) \cap f_{\ell}(K)) = 0$ for all $k \ne \ell$, then $\sum_{j} r_j^{\beta} = 1$. 
See \cite[Theorem 3]{BessenyeiPenzes2022}\footnote{In the second statement of \cite[Theorem 3]{BessenyeiPenzes2022}, the assumption that each $f_j$ is surjective is missing.}. 
\end{Rem}

\section{The open set condition is necessary}\label{sec:osc}

In this section, we extend the necessity result proved in \cite{Schief1994} from the Euclidean setting to the setting of complete, normal and regular semimetric spaces.  

\begin{Thm}\label{thm:osc-necessary}
Suppose that Assumptions \ref{ass:two-additional-metric} and \ref{ass:additional-f} (ii) hold. 
Let $K$ be the attractor of $\{f_j\}_j$. 
Let $\alpha$ be the similarity dimension of the similitudes $\{f_j\}_j$. 
If $\mathcal{H}^{\alpha}(K) > 0$, then the open set condition holds.  
\end{Thm}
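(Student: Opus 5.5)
I plan to follow Schief's argument as adapted to doubling metric spaces by Balogh and Rohner, replacing each use of the triangle inequality by $\Phi_d$ together with the strong regularity bound $\Phi_d(\epsilon,\epsilon)\le C_1\epsilon$ for small $\epsilon$. The proof has three steps: a finiteness property for the overlaps of cylinder images, Schief's construction of a "maximal neighbourhood type" word, and the definition of the open set from that word.

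\textbf{Step 1.} From $\mathcal{H}^{\alpha}(K)>0$ I will derive a uniform bound on neighbour counts. For $\rho\in(0,r_{\min})$ and $x\in X$, set
\[ N(x,\rho)\coloneqq\left|\left\{\sigma\in\Pi_\rho\colon f_\sigma(K)\cap \textup{int}(B(x,\rho))\ne\emptyset\right\}\right|. \]
The claim is that $\sup_{x,\rho}N(x,\rho)<\infty$ over $\rho$ small.
\begin{itemize}
\item First I show $\mathcal{H}^\alpha(K)<\infty$ by covering with $\{f_\sigma(K)\}_{\sigma\in\Pi_\rho}$. Since $\textup{diam} f_\sigma(K)=r_\sigma\textup{diam}K\le\rho\,\textup{diam}K$ and $\sum_{\sigma\in\Pi_\rho}r_\sigma^\alpha=1$, this is Hutchinson's upper bound.
\item Then I use a density argument. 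By Bandt--Graf, or directly, the scaling $\mathcal{H}^\alpha(f_\sigma(A))=r_\sigma^\alpha\mathcal{H}^\alpha(A)$ holds for surjective similitudes. Combined with $\sum_{\sigma\in\Pi_\rho}r_\sigma^\alpha=1$ and $0<\mathcal{H}^\alpha(K)<\infty$, it gives $\mathcal{H}^\alpha(f_\sigma(K)\cap f_\tau(K))=0$ for distinct $\sigma,\tau\in\Pi_\rho$.
\item Suppose $N(x,\rho)$ were unbounded. After rescaling by $f_\sigma^{-1}$, which maps $\rho$-scale pieces to unit scale by Lemma \ref{lem:ball-surjection}, I get many distinct maps $g=f_\sigma^{-1}f_\tau$ with $g(K)$ meeting a fixed bounded region. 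The Bandt--Graf criterion then says the identity is an accumulation point of $\{f_\sigma^{-1}f_\tau\}$. For this I need compactness of similitude families restricted to a compact set, via Arzel\`a--Ascoli on $K$ using total boundedness, which comes from geometric doubling plus strong regularity.
\item An accumulation at the identity produces, at arbitrarily small scales, two distinct cylinders that nearly coincide. Iterating this yields a contradiction with $\mathcal{H}^\alpha(K)>0$ through a mass count: the measure $\mathcal{H}^\alpha|_K$ is invariant with weights $r_j^\alpha$, and Proposition \ref{prop:mass-dist}-style counting applies.
\end{itemize}
I expect this Bandt--Graf step to be the main obstacle. I would try to sidestep the group-theoretic part by following Balogh--Rohner's direct proof, which only uses doubling to bound the number of $\rho$-separated points in a ball via Lemma \ref{lem:number-disjoint}.

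\textbf{Step 2.} Given bounded neighbour counts, pick $\tau\in\{1,\dots,\ell\}^*$ and a scale at which $N$ is maximal in Schief's sense. Define the neighbourhood of $\tau$ as the set of $\sigma$ with $r_\sigma$ comparable to $r_\tau$ and $f_\sigma(K)$ within distance $\eta r_\tau$ of $f_\tau(K)$. Maximality gives the key invariance: for every $\omega$, the neighbours of $\tau\omega$ are exactly the $\sigma\omega$ with $\sigma$ a neighbour of $\tau$. Here "distance $\eta r$" should be read as $f_\sigma(K)$ meeting $\textup{int}(B(y,\eta r))$ for some $y\in f_\tau(K)$. Lemma \ref{lem:interior-open} makes these sets open, and strong regularity keeps the relevant constants linear in $r$.

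\textbf{Step 3.} Set
\[ V\coloneqq\bigcup_{\omega\in\{1,\dots,\ell\}^*} f_\omega f_\tau^{-1}\bigl(U\bigr),\qquad U\coloneqq\bigcup_{y\in f_\tau(K)}\textup{int}\bigl(B(y,\eta r_\tau/C)\bigr). \]
The set $V$ is open because each $f_\sigma$ is a homeomorphism (Lemma \ref{lem:ball-surjection}). It is nonempty, and it is bounded because it lies in a fixed $\Phi_d$-neighbourhood of $K$, which is bounded by normality and Lemma \ref{lem:bounded-basic}. By construction $f_j(V)\subset V$.

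Disjointness of $f_i(V)$ and $f_j(V)$ for $i\ne j$ follows as in Schief. An intersection point would lie in $f_{i\omega}f_\tau^{-1}(U)\cap f_{j\omega'}f_\tau^{-1}(U)$. Choosing $\eta$ small relative to $\Phi_d$, with one application of strong regularity to chain the two small balls, this forces $f_{j\omega'}\tau$ to be a neighbour of $f_{i\omega}\tau$ of a type not present in the neighbourhood of $\tau$. That contradicts the maximality obtained in Step 2.
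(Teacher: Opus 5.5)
\textbf{Gap in Step 1.} All of the difficulty sits in your Step 1, turning $\mathcal{H}^{\alpha}(K)>0$ into a uniform neighbour bound, and the proposal gives no mechanism that works. Lemma \ref{lem:number-disjoint} counts pairwise disjoint sets that each contain a ball of radius comparable to the scale. Proposition \ref{prop:mass-dist} obtains such sets as the $f_\sigma(V)$ from the open set condition, which is what you are trying to prove. The cylinders $f_\sigma(K)$ are neither disjoint nor contain balls, so "Proposition \ref{prop:mass-dist}-style counting" and the Balogh--Rohner counting via Lemma \ref{lem:number-disjoint} are circular here. The Bandt--Graf route is only sketched, and it still ends with the claim that nearly coincident cylinders contradict $\mathcal{H}^{\alpha}(K)>0$. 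That claim requires taking a near-optimal cover of $f_\sigma(K)$ and making it also cover a nearby $f_\tau(K)$. With arbitrary covering sets this means enlarging them, and that is exactly what fails in a semimetric space. There is no general bound $\textup{diam}(A_\eta)\le \textup{diam}(A)+o(1)$ for the $\eta$-neighbourhood $A_\eta$. Strong regularity controls it only up to a multiplicative constant, which ruins a $(1+o(1))$ mass-deficit argument.

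\textbf{What fills it, and a separate error in Steps 2--3.} The paper's missing idea is the open-cover measure $\mathcal{H}^{\alpha}_{o}$. Lemma \ref{lem:open-Haus-basic} gives $\mathcal{H}^{\alpha}_{o;\infty}=\mathcal{H}^{\alpha}_{o}$ on Borel subsets of $K$ and null overlaps of incomparable cylinders. One then fixes a finite open cover $U_1,\dots,U_n$ of $K$ with $\sum_i\textup{diam}(U_i)^{\alpha}\le(1+r_{\min}^{\alpha})\mathcal{H}^{\alpha}_{o}(K)$ and notes that $\delta=\textup{dist}(K,U^c)>0$ by compactness and regularity. Hence any set within Hausdorff distance less than $\delta r_\sigma$ of $f_\sigma(K)$ is covered by the same $f_\sigma(U_i)$, with no enlargement. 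This gives Lemma \ref{lem:sep-dist-Haus}. The bound on $|\Gamma_\epsilon(v)|$ then follows from total boundedness of $(\mathbf{C}(\overline{B(x,R_0)}),d_H)$ (Lemmas \ref{lem:doubling-closure-ball-cpt} and \ref{lem:set-of-closed-is-cpt-wrt-Haus-dist}). If you prefer a genuine mass count, the right input is $\mathcal{H}^{\alpha}_{o}(B)=\mathcal{H}^{\alpha}_{o;\infty}(B)\le\textup{diam}(W)^{\alpha}$ for Borel $B\subset K$ and open $W\supset B$. Combined with null overlaps and $r_\sigma\asymp r_v$ for $\sigma\in\Gamma_\epsilon(v)$, this bounds $|\Gamma_\epsilon(v)|$ directly; Proposition \ref{prop:mass-dist} plays no role.

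Separately, Steps 2--3 have the word indices reversed. With $f_\sigma=f_{\sigma_1}\circ\cdots\circ f_{\sigma_m}$, a neighbour configuration is transported by prefixing: the neighbours of $\omega\tau$ are the $\omega\sigma$, not the $\sigma\omega$. Your set $V=\bigcup_\omega f_\omega f_\tau^{-1}(U)$ also collapses. Indeed $f_\tau^{-1}(U)=\bigcup_{x\in K}\textup{int}(B(x,\eta/C))$ is a neighbourhood of all of $K$, so $V\supset K$ (take $\omega=\emptyset$). Then $f_i(V)\cap f_j(V)\supset f_i(K)\cap f_j(K)$, which is nonempty in the first example of Section \ref{sec:exa} when $r_1^{1/\beta}+r_2^{1/\beta}=1$, even though the open set condition holds there. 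You need $V=\bigcup_\omega f_\omega(U)=\bigcup_\omega f_{\omega\tau}(G^*)$, as in the paper.
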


As an outline, we follow the proof in \cite[Section 9.6]{BishopPeres2017}, which is a simplification of  \cite{Schief1994}. 
We make only a few departures, but several parts are a little more complicated.

We remark that the diameter of the $\epsilon$-neighborhood of $A$ is hard to estimate in terms of the diameter of $A$. 

For $A  \subset X$, $\beta \ge 0$ and $\delta \in (0, \infty]$, let 
\[ \mathcal{H}^{\beta}_{o;\delta}(A) \coloneqq \inf\left\{\sum_i \textup{diam}(U_i)^{\beta}  \colon A \subset \bigcup_i U_i, \ U_i \textup{ open}, \ \textup{diam}(U_i) \le \delta \right\}  \] 
and 
\[ \mathcal{H}^{\beta}_{o}(A) \coloneqq \lim_{\delta \to +0} \mathcal{H}^{\beta}_{o;\delta}(A). \] 
Then $\mathcal{H}^{\beta}_{o;\delta}(A) \ge \mathcal{H}^{\beta}_{\delta}(A)$ and hence $\mathcal{H}^{\beta}_{o}(A) \ge \mathcal{H}^{\beta}(A)$. 
$\mathcal{H}^{\beta}_{o}$ is an outer measure on $X$. 
By the same arguments as in the proof of  \cite[Lemma 11]{BessenyeiPenzes2022}, all Borel subsets of $X$ are $\mathcal{H}^{\beta}_{o}$-measurable. 

We say that two finite strings $\sigma, \tau \in \{1,\dots, \ell\}^{*}$ are {\it incomparable} if neither is a prefix of the other.  
The following corresponds to \cite[Proposition 2.1.3]{BishopPeres2017}. 

\begin{Lem}\label{lem:open-Haus-basic}
Let $\alpha$ be a positive real number such that $\sum_j r_j^{\alpha} = 1$. 
Then,\\
(i) $\mathcal{H}^{\alpha}_{o;\infty}(K) = \mathcal{H}^{\alpha}_{o}(K) < \infty$. \\
(ii) For every $ \mathcal{H}^{\alpha}_{o}$-measurable subset $B \subset K$, $\mathcal{H}^{\alpha}_{o;\infty}(B) = \mathcal{H}^{\alpha}_{o}(B)$. \\
(iii) If $\sigma$ and $\tau$ are incomparable, then $\mathcal{H}^{\alpha}_{o} \left(f_{\sigma}(K) \cap f_{\tau}(K) \right) = 0$. 
\end{Lem}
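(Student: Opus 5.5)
The plan is to imitate the classical argument of \cite[Proposition 2.1.3]{BishopPeres2017}, working throughout with open covers. The key observation is the following. By Lemma \ref{lem:ball-surjection}, each $f_\sigma$ is a surjective similitude and a homeomorphism of $X$ onto itself. Hence $U \mapsto f_\sigma(U)$ is a bijection of the family of open sets, and $\textup{diam}(f_\sigma(U)) = r_\sigma \textup{diam}(U)$. Consequently
\[ \mathcal{H}^{\alpha}_{o;r_\sigma\delta}(f_\sigma(A)) = r_\sigma^{\alpha}\,\mathcal{H}^{\alpha}_{o;\delta}(A) \quad\textup{and}\quad \mathcal{H}^{\alpha}_{o}(f_\sigma(A)) = r_\sigma^{\alpha}\,\mathcal{H}^{\alpha}_{o}(A) \]
for every $A \subset X$. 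Iterating $\sum_j r_j^\alpha=1$ also gives $\sum_{|\sigma|=n} r_\sigma^{\alpha}=1$ for every $n$.

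For (i), I first show $\mathcal{H}^{\alpha}_{o;\infty}(K)<\infty$, which requires a single open set of finite diameter containing $K$. Since $K$ is compact, it is bounded, so $K \subset B(x,R)$ for some $x$ and $R$. If $y \in B(x,R)$ and $z \in B(y,1)$, then $d(x,z) \le \Phi_d(R,1) < \infty$ by normality. Thus, with $R' \coloneqq \Phi_d(R,1)+1$, we get $B(x,R) \subset \textup{int}(B(x,R'))$, which is open by Lemma \ref{lem:interior-open} (i). Its diameter is at most $\Phi_d(R',R') < \infty$. Next, fix $\epsilon>0$ and an open cover $\{U_i\}$ of $K$ with $\sum_i \textup{diam}(U_i)^\alpha \le \mathcal{H}^{\alpha}_{o;\infty}(K)+\epsilon$; then $D \coloneqq \sup_i \textup{diam}(U_i)<\infty$. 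Using $K=\bigcup_{|\sigma|=n} f_\sigma(K)$, the family $\{f_\sigma(U_i)\}_{|\sigma|=n,\, i}$ is a countable open cover of $K$ by sets of diameter at most $r_{\max}^n D$. Its $\alpha$-sum is $\sum_{|\sigma|=n} r_\sigma^\alpha \sum_i \textup{diam}(U_i)^\alpha \le \mathcal{H}^{\alpha}_{o;\infty}(K)+\epsilon$. Letting $n\to\infty$ and then $\epsilon\to 0$ gives $\mathcal{H}^{\alpha}_{o}(K)\le\mathcal{H}^{\alpha}_{o;\infty}(K)$. The reverse inequality is trivial.

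For (ii), the inequality $\mathcal{H}^{\alpha}_{o;\infty}(B)\le\mathcal{H}^{\alpha}_{o}(B)$ is trivial, and all quantities involved are at most $\mathcal{H}^{\alpha}_{o}(K)<\infty$. If the inequality were strict for some measurable $B\subset K$, then by measurability of $B$ and subadditivity of the set function $\mathcal{H}^{\alpha}_{o;\infty}$,
\[ \mathcal{H}^{\alpha}_{o}(K)=\mathcal{H}^{\alpha}_{o}(B)+\mathcal{H}^{\alpha}_{o}(K\setminus B) > \mathcal{H}^{\alpha}_{o;\infty}(B)+\mathcal{H}^{\alpha}_{o;\infty}(K\setminus B)\ge \mathcal{H}^{\alpha}_{o;\infty}(K). \]
This contradicts (i).

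For (iii), I first treat words of the same length $n$. The sets $f_\omega(K)$ with $|\omega|=n$ are compact, hence closed and Borel, and therefore $\mathcal{H}^{\alpha}_{o}$-measurable. By the scaling identity, $\sum_{|\omega|=n}\mathcal{H}^{\alpha}_{o}(f_\omega(K)) = \mathcal{H}^{\alpha}_{o}(K) = \mathcal{H}^{\alpha}_{o}\bigl(\bigcup_{|\omega|=n} f_\omega(K)\bigr)$, and all values are finite. Write the measure of the union as the sum of the measures of the sets $f_\omega(K)\setminus\bigcup_{\omega'<\omega} f_{\omega'}(K)$. Equality with the full sum then forces $\mathcal{H}^{\alpha}_{o}(f_\omega(K)\cap f_{\omega'}(K))=0$ for all $\omega\neq\omega'$ with $|\omega|=|\omega'|=n$. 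For general incomparable $\sigma$ and $\tau$, take $n=\max\{|\sigma|,|\tau|\}$. Then $f_\sigma(K)$ is the union of the sets $f_\omega(K)$ over extensions $\omega$ of $\sigma$ with $|\omega|=n$, and likewise for $\tau$. Incomparability makes these two families of words disjoint, so $f_\sigma(K)\cap f_\tau(K)$ is a finite union of null sets.

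I expect the main obstacle to be the open-set bookkeeping rather than the measure theory. Balls need not be open, so step (i) needs the explicit open enlargement $\textup{int}(B(x,R'))$ with controlled diameter. The scaling identity also relies on $f_\sigma$ being an open map with open inverse images, and this is exactly where the surjectivity in Assumption \ref{ass:additional-f} (ii), via Lemma \ref{lem:ball-surjection}, is essential.
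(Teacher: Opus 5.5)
Your proposal is correct and follows the paper's proof essentially step for step: a finite-diameter open cover plus rescaling by the open maps $f_\sigma$ for (i), the same subadditivity and measurability squeeze for (ii), and additivity of $\mathcal{H}^{\alpha}_{o}$ on Borel sets combined with the scaling identity for (iii). The only variations are minor. In (i) you use a single enlarged set $\textup{int}(B(x,R'))$ instead of a finite subcover by sets $\textup{int}(B(x_k,1))$. In (iii) you obtain null intersections for all distinct words of a common length $n$ and then split $f_\sigma(K)$ and $f_\tau(K)$ into level-$n$ pieces, whereas the paper proves the first-level case and transports it through $f_{\sigma\wedge\tau}$; both routes work.
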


\begin{proof}
(i) Since $K \subset \bigcup_{x \in K} \textup{int}(B(x,1))$ and $K$ is compact, 
there exist $x_1, \dots, x_m \in K$ such that $K \subset \bigcup_{k=1}^{m} \textup{int}(B(x_k, 1))$. 
Since $X$ is normal, the diameter of a ball is finite. 
Hence $\mathcal{H}^{\alpha}_{o;\infty}(K) < \infty$. 

It is obvious that $\mathcal{H}^{\alpha}_{o;\infty}(K) \le \mathcal{H}^{\alpha}_{o}(K)$. 
We show the converse. 
We recall that $\mathcal{H}^{\alpha}_{o;\infty}(K) < \infty$. 
Let $\{U_i\}_i$ be a countable family of open sets covering $K$ such that $\sum_i \textup{diam}(U_i)^{\alpha} \le \mathcal{H}^{\alpha}_{o;\infty}(K) +\epsilon$.  
Take $n \in \mathbb{N}$ so large that $r_{\max}^n \sup_i \textup{diam}(U_i) < \epsilon$. 
By Lemma \ref{lem:ball-surjection}, each $f_j$ is an open map. 
Then $\{f_{\sigma}(U_i) \colon  |\sigma| = n, i \ge 1\}$ is an open cover of $K$ such that $\textup{diam}(f_{\sigma}(U_i)) < \epsilon$. 
By using the definition of $\alpha$, 
\[ \mathcal{H}_{o;\epsilon}^{\alpha}(K) \le \sum_{i} \sum_{\sigma : |\sigma| = n} \textup{diam}(f_{\sigma}(U_i))^{\alpha} =   \sum_{i} \textup{diam} (U_i)^{\alpha}  \le \mathcal{H}^{\alpha}_{o;\infty}(K) +\epsilon. \]
By letting $\epsilon \to +0$, we obtain that $\mathcal{H}^{\alpha}_{o;\infty}(K) \ge \mathcal{H}^{\alpha}_{o}(K)$. 
Thus we see that $\mathcal{H}^{\alpha}_{o;\infty}(K) = \mathcal{H}^{\alpha}_{o}(K)$.

(ii) By subadditivity of outer measures, 
\[ \mathcal{H}^{\alpha}_{o;\infty}(K) \le \mathcal{H}^{\alpha}_{o;\infty}(B) + \mathcal{H}^{\alpha}_{o;\infty}(K \setminus B) \le  \mathcal{H}^{\alpha}_{o}(B) + \mathcal{H}^{\alpha}_{o}(K \setminus B). \]
Since $B$ is $ \mathcal{H}^{\alpha}_{o}$-measurable, 
$\mathcal{H}^{\alpha}_{o}(B) + \mathcal{H}^{\alpha}_{o}(K \setminus B) = \mathcal{H}^{\alpha}_{o}(K)$. 
By (i), $ \mathcal{H}^{\alpha}_{o}(K) =  \mathcal{H}^{\alpha}_{o;\infty}(K)$. 
Since $ \mathcal{H}^{\alpha}_{o}$ is always larger than or equal to $ \mathcal{H}^{\alpha}_{o;\infty}$, 
we see that $\mathcal{H}^{\alpha}_{o;\infty}(B) = \mathcal{H}^{\alpha}_{o}(B)$. 

(iii) Since each $f_j$ is continuous, $f_j (K)$ is also compact. 
Since $X$ is Hausdorff, $f_j (K)$ is closed. 
Furthermore, for every $\sigma \in \{1,\dots, \ell\}^{*}$ and $A \subset X$, 
$\mathcal{H}^{\alpha}_{o}(f_{\sigma}(A)) = r_{\sigma}^{\alpha} \mathcal{H}^{\alpha}_{o}(A)$. 
Then 
\[ \mathcal{H}^{\alpha}_{o}(K) =  \mathcal{H}^{\alpha}_{o} \left(\bigcup_{j} f_j(K) \right)  \le \sum_{j}  \mathcal{H}^{\alpha}_{o}(f_j(K)) = \sum_j r_j^{\alpha}  \mathcal{H}^{\alpha}_{o}(K) =  \mathcal{H}^{\alpha}_{o}(K).   \]
Hence, $ \mathcal{H}^{\alpha}_{o} \left(\bigcup_{j} f_j(K) \right) = \sum_{j}  \mathcal{H}^{\alpha}_{o}(f_j(K))$. 
Since $ \mathcal{H}^{\alpha}_{o}$ is additive on the Borel subsets of $X$, 
$\mathcal{H}^{\alpha}_{o}(f_{i}(K) \cap f_{j}(K)) = 0$ for $i \ne j$. 

If $\sigma$ and $\tau$ are incomparable, then there exist $i \ne j$ such that $f_{\sigma}(K) \subset f_{\sigma \wedge \tau} (f_i (K))$ and $f_{\tau}(K) \subset f_{\sigma \wedge \tau} (f_j (K))$, where $\sigma \wedge \tau$ is the longest common prefix of $\sigma$  and $\tau$. 
 Since $f_{\sigma \wedge \tau} $ is a bijection, 
 \[ \mathcal{H}^{\alpha}_{o} \left(f_{\sigma}(K) \cap f_{\tau}(K) \right) \le \mathcal{H}^{\alpha}_{o} \left(f_{\sigma \wedge \tau}(f_i(K)) \cap f_{\sigma \wedge \tau}(f_j(K)) \right) \]
 \[ =  \mathcal{H}^{\alpha}_{o} \left(f_{\sigma \wedge \tau}(f_i(K) \cap f_j(K)) \right) = r_{\sigma \wedge \tau}^{\alpha} \mathcal{H}^{\alpha}_{o}(f_{i}(K) \cap f_{j}(K))  = 0. \]
\end{proof}

Since $K$ is compact and $\mathcal{H}_o^{\alpha}(K) \ge \mathcal{H}^{\alpha}(K) > 0$, there exist open sets $U_1, \dots, U_n$ such that $K \subset \bigcup_{i=1}^{n} U_i$ and $\sum_{i} \textup{diam}(U_i)^{\alpha} \le (1+r_{\min}^{\alpha})\mathcal{H}_{o}^{\alpha}(K)$. 
Let $U \coloneqq  \bigcup_{i=1}^{n} U_i$ and $\delta \coloneqq \textup{dist}(K, U^c)$. 
We show that $\delta > 0$. 
Assume that $\delta = 0$. 
Then there exist two sequences $(x_n)_n$ in $K$ and $(y_n)_n$ in $U^c$ such that $d(x_n, y_n) \to 0, n \to \infty$. 
Since $K$ is compact, it is sequentially compact and hence there exists a subsequence $(x_{k_n})_n$ and a limit $x \in K$ such that $d(x_{k_n}, x) \to 0, n \to \infty$. 
Since $X$ is regular, $d(x, y_{k_n}) \le \Phi_d \left(d(x, x_{k_n}), d(x_{k_n}, y_{k_n})\right) \to 0, n \to \infty$. 
By Lemma \ref{lem:seq-closure}, $x \in U^c$, which contradicts $K \subset U$. 

First, we obtain a separation result for several pairs of $\left\{f_{\sigma}(K) \colon \sigma \in \{1, \dots, \ell\}^{*} \right\}$ with respect to the Hausdorff distance. 
Define the Hausdorff distance between subsets of $X$ by 
\[ d_H (A_1, A_2) \coloneqq \inf\left\{\epsilon > 0 \colon A_1 \subset \bigcup_{y \in A_2} B(y,\epsilon), \  A_2 \subset \bigcup_{x \in A_1} B(x,\epsilon) \right\}, \quad A_1, A_2 \subset X. \]

\begin{Lem}\label{lem:sep-dist-Haus}
 If $\sigma$ and $\tau$ are incomparable and $r_{\tau} > r_{\min} r_{\sigma}$, 
 then 
 \[ d_H (f_{\sigma}(K), f_{\tau}(K)) \ge \delta r_{\sigma}.\]  
\end{Lem}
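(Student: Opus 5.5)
Suppose, for contradiction, that $d_H(f_{\sigma}(K), f_{\tau}(K)) < \delta r_{\sigma}$. The plan follows \cite[Section 9.6]{BishopPeres2017}: pull everything back by $f_{\sigma}^{-1}$, which is possible because $f_{\sigma}$ is a bijective similitude. This will produce a cover of $K$ more efficient than the near-optimal cover $\{U_i\}_{i=1}^n$ fixed above, which is a contradiction.

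\textbf{Step 1 (pullback).} By Lemma \ref{lem:ball-surjection}, $f_{\sigma}^{-1}(B(f_{\sigma}(x), \epsilon r_{\sigma})) = B(x,\epsilon)$, and $f_{\sigma}^{-1}$ scales distances by $r_{\sigma}^{-1}$. So the assumption gives
\[ d_H\bigl(K, f_{\sigma}^{-1}(f_{\tau}(K))\bigr) < \delta. \]
Put $g \coloneqq f_{\sigma}^{-1} \circ f_{\tau}$. This is a surjective similitude with ratio $r_{\tau}/r_{\sigma} > r_{\min}$. Every point of $g(K)$ lies within distance less than $\delta$ of some point of $K$. Since $\delta = \textup{dist}(K, U^c)$, each such point is in $U$, so $g(K) \subset U = \bigcup_{i=1}^{n} U_i$.

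\textbf{Step 2 (an efficient cover).} The sets $g^{-1}(U_i)$ are open, since $g$ is a homeomorphism, and they cover $K$. By Step 1 and the same scaling of diameters, $\textup{diam}(g^{-1}(U_i)) = (r_{\sigma}/r_{\tau}) \textup{diam}(U_i)$. Hence
\[ \mathcal{H}^{\alpha}_{o;\infty}(f_{\tau}(K)) = r_{\sigma}^{\alpha}\, \mathcal{H}^{\alpha}_{o;\infty}\bigl(g(K)\bigr). \]
A cleaner route is to work with $f_{\sigma}(U)$ directly: it is an open set containing $f_{\tau}(K)$, and also containing $f_{\sigma}(K)$. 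By Lemma \ref{lem:open-Haus-basic}(iii), $f_{\sigma}(K)$ and $f_{\tau}(K)$ overlap in an $\mathcal{H}^{\alpha}_{o}$-null set. Additivity on Borel sets (compact sets are closed) together with scaling gives
\[ \mathcal{H}^{\alpha}_{o}\bigl(f_{\sigma}(K) \cup f_{\tau}(K)\bigr) = (r_{\sigma}^{\alpha} + r_{\tau}^{\alpha})\, \mathcal{H}^{\alpha}_{o}(K) > r_{\sigma}^{\alpha}(1 + r_{\min}^{\alpha})\, \mathcal{H}^{\alpha}_{o}(K). \]
This uses $r_{\tau} > r_{\min} r_{\sigma}$, and positivity of $\mathcal{H}^{\alpha}_{o}(K) \ge \mathcal{H}^{\alpha}(K) > 0$.

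\textbf{Step 3 (contradiction).} The union $f_{\sigma}(K) \cup f_{\tau}(K)$ is a Borel subset of $K$. By Lemma \ref{lem:open-Haus-basic}(ii), its $\mathcal{H}^{\alpha}_{o}$-measure equals its $\mathcal{H}^{\alpha}_{o;\infty}$-measure. The latter is at most
\[ \sum_i \textup{diam}(f_{\sigma}(U_i))^{\alpha} = r_{\sigma}^{\alpha} \sum_i \textup{diam}(U_i)^{\alpha} \le r_{\sigma}^{\alpha}(1 + r_{\min}^{\alpha})\, \mathcal{H}^{\alpha}_{o}(K), \]
because $\{f_{\sigma}(U_i)\}_i$ is an open cover of the union. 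This contradicts Step 2.

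\textbf{Main obstacle.} The delicate point is Step 1 in the semimetric setting. One must check that the Hausdorff distance, defined through balls that need not be open, transforms exactly under $f_{\sigma}^{-1}$; Lemma \ref{lem:ball-surjection} handles this, since surjectivity gives equality of balls. One must also check that "within distance $<\delta$ of $K$" really forces membership in $U$ even without a triangle inequality. This is immediate from the definition $\delta = \textup{dist}(K, U^c)$: a point $y$ with $d(x,y) < \delta$ for some $x \in K$ cannot lie in $U^c$. A minor further check is that the infimum in $d_H$ being $< \delta r_{\sigma}$ supplies some $\epsilon < \delta r_{\sigma}$ that works.
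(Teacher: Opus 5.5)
Your argument is correct and is essentially the paper's proof. The paper also establishes $f_{\tau}(K) \subset f_{\sigma}(U)$, though it does so directly from $\textup{dist}(f_{\sigma}(K), f_{\sigma}(U^c)) = \delta r_{\sigma}$ rather than by pulling back through $f_{\sigma}^{-1}$; it then compares the cover $\{f_{\sigma}(U_i)\}_i$ against the additivity from Lemma \ref{lem:open-Haus-basic} (ii), (iii), so your detour through $g^{-1}(U_i)$ can be deleted. One small point: the strict inequality in your Step 2 also needs $\mathcal{H}^{\alpha}_{o}(K) < \infty$, which is Lemma \ref{lem:open-Haus-basic} (i).
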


\begin{proof}
We show this by contradiction. 
Assume that $d_H (f_{\sigma}(K), f_{\tau}(K)) < \delta r_{\sigma}$. 
We first remark that $f_{\sigma}(K) \subset f_{\sigma}(U)$. 
Let $\eta \in (d_H (f_{\sigma}(K), f_{\tau}(K)), \delta r_{\sigma})$. 
Since $\textup{dist}(f_{\sigma}(K), (f_{\sigma}(U))^c) = \textup{dist}(f_{\sigma}(K), f_{\sigma}(U^c)) = \delta r_{\sigma}$, 
\[ f_{\tau}(K) \subset \bigcup_{x \in f_{\sigma}(K)} B(x,\eta) \subset f_{\sigma}(U).  \]

Therefore, $\{f_{\sigma}(U_i) \colon 1 \le i \le n\}$ is an open cover of $f_{\sigma}(K) \cup f_{\tau}(K)$. 
Since $K$ is the attractor, $f_{\sigma}(K) \cup f_{\tau}(K) \subset K$ and $f_{\sigma}(K) \cup f_{\tau}(K)$ is a closed subset of $X$. 
We can apply Lemma \ref{lem:open-Haus-basic} (ii) and obtain that 
\[ \mathcal{H}^{\alpha}_{o} \left(f_{\sigma}(K) \cup f_{\tau}(K) \right) = \mathcal{H}^{\alpha}_{o;\infty} \left(f_{\sigma}(K) \cup f_{\tau}(K) \right) \le \sum_{i=1}^{n} \textup{diam}(f_{\sigma}(U_i))^{\alpha} \]
\[ = r_{\sigma}^{\alpha} \sum_{i} \textup{diam}(U_i)^{\alpha} \le  r_{\sigma}^{\alpha}(1+r_{\min}^{\alpha})\mathcal{H}_{o}^{\alpha}(K). \]
By Lemma \ref{lem:open-Haus-basic} (iii), 
\[ \mathcal{H}^{\alpha}_{o} \left(f_{\sigma}(K) \cup f_{\tau}(K) \right) = \mathcal{H}^{\alpha}_{o} \left(f_{\sigma}(K) \right) + \mathcal{H}^{\alpha}_{o} \left(f_{\tau}(K) \right) =  (r_{\sigma}^{\alpha} +r_{\tau}^{\alpha})\mathcal{H}_{o}^{\alpha}(K). \]
By the assumption that $r_{\tau} > r_{\min} r_{\sigma}$ and $0 < \mathcal{H}^{\alpha}_{o}(K) < \infty$, 
we see that there is a contradiction. 
\end{proof}

We remark that $\textup{diam}(K) > 0$ since $\mathcal{H}^{\alpha}(K) > 0$. 
For $b > 0$, we let 
\begin{equation}\label{eq:def-cutset-2} 
\Pi_b \coloneqq \left\{\sigma \in \{1,\dots,\ell\}^{*} \colon r_{\sigma} \textup{diam}(K)  \le b < r_{\sigma^{\prime}} \textup{diam}(K)  \right\}. 
\end{equation}
This is a minimal cut-set if $b < \textup{diam}(K)$, which is equivalent to $\Pi_b \ne \emptyset$. 

For $\epsilon \in (0,1)$, let $G_{\epsilon} \coloneqq \bigcup_{x \in K} B(x,\epsilon)$. 
By the normality of $X$, $\textup{diam}(G_{\epsilon}) < \infty$. 
However it is not simple to relate the diameter of $G_{\epsilon}$ to the diameter of $K$. 

For $v \in \{1,\dots, \ell\}^{*}$, let 
\[ \Gamma_{\epsilon}(v) \coloneqq \left\{\sigma \in \Pi_{\textup{diam}(f_{v} (G_{\epsilon}))} \colon f_{\sigma}(K) \cap f_{v} (G_{\epsilon}) \ne \emptyset \right\}.\]

\begin{Lem}\label{lem:doubling-closure-ball-cpt}
If  the geometric doubling property in Assumption \ref{ass:two-additional-metric} (ii) holds, 
then the closure of a ball is compact. 
\end{Lem}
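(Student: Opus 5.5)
We prove that $\overline{B(x,r)}$ is complete and totally bounded, and then invoke the criterion recalled after Lemma \ref{lem:bounded-basic}. By Lemma \ref{lem:seq-closure}, a closed subset of the complete regular space $X$ is complete. By \cite[Lemma 2]{BessenyeiPenzes2022}, a closed and totally bounded subset of $X$ is compact. Since $\overline{B(x,r)}$ is closed, it therefore suffices to show that $\overline{B(x,r)}$ is totally bounded.

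\textbf{Covering the ball.} Fix $\epsilon > 0$. By regularity, $\Phi_d$ is continuous at $(0,0)$, so we choose $\eta > 0$ with $\Phi_d(\eta,\eta) < \epsilon/2$, and then $\eta' > 0$ with $\Phi_d(\eta',\eta') < \eta$. Pick $n$ with $r/2^n < \eta'$. Iterating Assumption \ref{ass:two-additional-metric} (ii) $n$ times, we cover $B(x,r)$ by at most $N_0^n$ balls $B(v_k, r/2^n)$. The centres $v_k$ need not lie in the set, so we discard every ball that misses $B(x,r)$ and, for each remaining $k$, choose a point $w_k \in B(x,r) \cap B(v_k, r/2^n)$.

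\textbf{Recentring.} For $y \in B(v_k, r/2^n)$,
\[ d(y,w_k) \le \Phi_d\left(d(y,v_k), d(v_k,w_k)\right) \le \Phi_d(\eta',\eta') < \eta. \]
So $B(x,r)$ is covered by finitely many balls $B(w_k,\eta)$ with $w_k \in B(x,r)$.

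\textbf{Passing to the closure.} Let $z \in \overline{B(x,r)}$. By Lemma \ref{lem:seq-closure}, there is $y \in B(x,r)$ with $d(z,y) < \eta$. Choose $k$ with $d(y,w_k) < \eta$. Then
\[ d(z,w_k) \le \Phi_d(\eta,\eta) < \epsilon/2. \]
Thus $\overline{B(x,r)} \subset \bigcup_k B(w_k, \epsilon/2)$. The definition of total boundedness requires centres in the set itself, and $w_k \in B(x,r) \subset \overline{B(x,r)}$, so this condition is met. A final application of $\Phi_d(\eta,\eta)<\epsilon$, if needed, absorbs any remaining slack. Hence $\overline{B(x,r)}$ is totally bounded, and so compact.

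\textbf{Main obstacle.} The only genuine difficulty is the absence of a triangle inequality. Each recentring step and the passage from the ball to its closure costs one application of $\Phi_d$. This is why the radius must be chosen small in two stages ($\eta'$ first, then $\eta$) using regularity at $(0,0)$. Strong regularity is not needed here.
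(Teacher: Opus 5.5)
Your proof is correct, but it deals with the closure differently from the paper.

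The paper first shows that the closure stays inside a larger ball. It sets $r^* \coloneqq \lim_{\delta \to +0}\Phi_d(r,\delta)$, which is finite by normality. For $y \in \overline{B(x,r)}$ with an approximating sequence $x_n \in B(x,r)$, the bound $d(x,y) \le \Phi_d(r, d(x_n,y))$ gives $\overline{B(x,r)} \subset B(x, r^*+1)$. Total boundedness is then read off from geometric doubling applied to this larger ball; the recentring of the covering balls at points of the set is left implicit.

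You never bound the closure inside a ball. Instead you:
\begin{itemize}
\item cover $B(x,r)$ itself by small balls;
\item recentre them at points $w_k \in B(x,r)$, at the cost of one application of $\Phi_d$;
\item transfer the cover to $\overline{B(x,r)}$ via density and a second application of $\Phi_d$, which is why you need the two-stage choice $\eta'$, then $\eta$.
\end{itemize}

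What each route buys:
\begin{itemize}
\item Your route makes the recentring step explicit. That step is needed because the definition of total boundedness requires centres in the set.
\item Your route uses only completeness, regularity and geometric doubling, not normality.
\item The paper's route is shorter and isolates a reusable containment $\overline{B(x,r)} \subset B(x,r^*+1)$. The price is the subtlety flagged in its footnote: one must use $r^*$ rather than $\Phi_d(r,0)$.
\end{itemize}

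Your sentence about ``a final application of $\Phi_d(\eta,\eta)<\epsilon$, if needed'' is superfluous. The cover by balls of radius $\epsilon/2$ centred in $\overline{B(x,r)}$ already finishes the argument.
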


\begin{proof}
Let $x \in X$ and $r > 0$. 
Let\footnote{It might hold that $r^* > \Phi_d (r,0)$.} $r^* \coloneqq \lim_{\delta \to +0} \Phi_d (r,\delta)$. 
Let $y \in \overline{B(x,r)}$. 
By Lemma \ref{lem:seq-closure}, there exists a sequence $(x_n)_n$ in $B(x,r)$ such that $d(x_n, y) \to 0, n \to \infty$.  
Since $d(x,y) \le \Phi_d (d(x, x_n), d(x_n, y)) \le \Phi_d (r, d(x_n, y))$ for every $n$, 
$d(x,y) \le r^*$. 
Hence, $ \overline{B(x,r)} \subset B(x, r^* + 1)$. 
By  the geometric doubling property, $\overline{B(x,r)}$ is totally bounded. 
Since $X$ is complete,  $\overline{B(x,r)}$ is also complete. 
Thus we see that  $\overline{B(x,r)}$ is compact. 
\end{proof}

\begin{Lem}\label{lem:set-of-closed-is-cpt-wrt-Haus-dist}
Let $E$ be a nonempty compact subset of $X$. 
Let $\mathbf{C}(E)$  be the set of nonempty closed subsets of $E$. 
Then, $(\mathbf{C}(E), d_H)$ is a totally bounded regular semimetric space. 
\end{Lem}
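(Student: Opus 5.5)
We check three things for $d_H$ on $\mathbf{C}(E)$: it is a semimetric, it is totally bounded, and it is regular. The idea throughout is to bound $d_H$ via the basic triangle function $\Phi_d$ of $X$.

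\emph{Semimetric.} Symmetry is immediate from the definition of $d_H$. Finiteness follows because $E$ is compact, hence bounded, so by Lemma \ref{lem:bounded-basic} $\textup{diam}(E)<\infty$ and $d_H(A_1,A_2)\le \textup{diam}(E)+1$. If $A_1=A_2$, then $d_H(A_1,A_2)=0$, since every point lies in its own $\epsilon$-ball. Conversely, suppose $d_H(A_1,A_2)=0$ and take $x\in A_1$. For every $n$ there is $y_n\in A_2$ with $d(x,y_n)<2^{-n}$. By Lemma \ref{lem:seq-closure}, $x\in\overline{A_2}=A_2$, using that $A_2$ is closed. By symmetry $A_1=A_2$.

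\emph{Key estimate.} Suppose $A_1\subset\bigcup_{y\in A_2}B(y,s)$ and $A_2\subset\bigcup_{z\in A_3}B(z,t)$. For $x\in A_1$, pick $y\in A_2$ with $d(x,y)<s$ and $z\in A_3$ with $d(y,z)<t$. Then $d(x,z)\le\Phi_d(s,t)$, so $x\in B(z,\Phi_d(s,t)+\eta)$ for every $\eta>0$. Applying the same argument in the other direction and letting $s\downarrow d_H(A_1,A_2)$, $t\downarrow d_H(A_2,A_3)$ gives
\[ d_H(A_1,A_3)\le \lim_{h\to+0}\Phi_d\bigl(d_H(A_1,A_2)+h,\,d_H(A_2,A_3)+h\bigr). \]
Hence $\Phi_{d_H}(u,v)\le \Phi_d(u+h,v+h)$ for every $h>0$. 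Since $X$ is regular, $\Phi_d$ is continuous at $(0,0)$ with $\Phi_d(0,0)=0$, so $\Phi_{d_H}(u,v)\to0$ as $(u,v)\to(0,0)$. Thus $\Phi_{d_H}$ is continuous at $(0,0)$, which is the regularity of $(\mathbf{C}(E),d_H)$.

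\emph{Total boundedness.} Fix $\epsilon>0$. By regularity, choose $\eta>0$ with $\Phi_d(\eta,\eta)<\epsilon/2$. Since $E$ is compact it is totally bounded, so there are finitely many points $x_1,\dots,x_m\in E$ with $E\subset\bigcup_k B(x_k,\eta)$. For each nonempty $S\subset\{1,\dots,m\}$ set $F_S\coloneqq\{x_k\colon k\in S\}$. This set is finite, hence closed, since $X$ is Hausdorff by Lemma \ref{lem:interior-open}; so $F_S\in\mathbf{C}(E)$.

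Given $A\in\mathbf{C}(E)$, let $S\coloneqq\{k\colon B(x_k,\eta)\cap A\ne\emptyset\}$, which is nonempty. Every point of $A$ lies in some $B(x_k,\eta)$ with $k\in S$. Conversely, each $x_k$ with $k\in S$ lies within distance $\eta$ of some point of $A$. So $d_H(A,F_S)\le\eta<\epsilon$, and the finitely many sets $F_S$ form an $\epsilon$-net in $\mathbf{C}(E)$.

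The paper's definition of total boundedness asks for net points inside the set being covered, and the $F_S$ are themselves members of $\mathbf{C}(E)$, so this is satisfied.

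The main obstacle is the key estimate. The naive bound is $d_H(A_1,A_3)\le\Phi_d(d_H(A_1,A_2),d_H(A_2,A_3))$, but this need not hold, because the infimum in $d_H$ and the strict inequalities defining balls force the "$+h$" slack. This is also why regularity, rather than an exact triangle function, is the natural conclusion.
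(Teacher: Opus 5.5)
Your proof is correct. The total-boundedness part is the same argument as the paper's. The paper takes an $\epsilon$-net $S$ of $E$ and assigns to $C$ the set of net points lying within $\epsilon$ of $C$. This gives $d_H\le\epsilon$, so the nonempty subsets of $S$ form a $2\epsilon$-net.

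The difference is in the semimetric and regularity claims. The paper does not prove these; it simply cites \cite[Lemma 7]{BessenyeiPenzes2022}. You derive them directly:
\begin{itemize}
\item separation follows from closedness of the members of $\mathbf{C}(E)$;
\item finiteness follows from $\textup{diam}(E)<\infty$;
\item regularity follows from the estimate $d_H(A_1,A_3)\le\lim_{h\to+0}\Phi_d\bigl(d_H(A_1,A_2)+h,\,d_H(A_2,A_3)+h\bigr)$. This estimate implicitly uses that the admissible radii in the definition of $d_H$ form an up-set.
\end{itemize}
This makes the lemma self-contained. It also gives an explicit bound $\Phi_{d_H}(u,v)\le\inf_{h>0}\Phi_d(u+h,v+h)$, which is exactly the control the paper needs later when it picks $\eta$ with $\textup{diam}(B_{d_H}(Z,\eta))<\delta r_{\min}/2$. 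Your remark that the $+h$ slack cannot be dropped matches the paper's own footnote that $\lim_{\delta\to+0}\Phi_d(r,\delta)$ may exceed $\Phi_d(r,0)$.

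One small slip: choosing $\eta$ with $\Phi_d(\eta,\eta)<\epsilon/2$ does not imply $\eta<\epsilon$. For example, in a space whose distinct points are at distance at least $1$, $\Phi_d(\eta,\eta)=0$ for all $\eta<1$. Just take $\eta<\epsilon$ directly; the $\Phi_d$ condition plays no role in that step.
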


\begin{proof}
By \cite[Lemma 7]{BessenyeiPenzes2022}, 
$(\mathbf{C}(E), d_H)$ is a regular semimetric space. 
Let $\epsilon > 0$. 
Then there exist $x_1, \dots, x_n \in E$ such that $E \subset \bigcup_{i=1}^{n} B(x_i, \epsilon)$. 
Let $S \coloneqq \{x_1, \dots, x_n\}$. 
Let $\mathcal{S}$ be the set of nonempty subsets of $S$. 
Then $\mathcal{S}$ is a subset of $\mathbf{C}(E)$ since $X$ is Hausdorff. 
Let $C \in \mathbf{C}(E)$. 
Let $A \coloneqq \{x \in S \colon \textup{ there exists $y \in C$ such that $d(x,y) < \epsilon$} \}$. 
Then $A \subset \bigcup_{y \in C} B(y,\epsilon)$ and $C \subset \bigcup_{x \in A} B(x,\epsilon)$. 
Hence, $d_H (A, C) \le \epsilon$. 
We remark that $A \in \mathcal{S}$ and $C \in B_{d_H}(A, 2\epsilon)$. 
Thus $\mathcal{S}$ is a $2\epsilon$-net in $ \mathbf{C}(E)$. 
\end{proof}

\begin{Lem}\label{lem:inverse-contained}
There exists a constant $R_0$ depending only on $\textup{diam}(K)$ such that for every $x \in K$, $v \in \{1,\dots, \ell\}^{*}$ and $\sigma \in \Pi_{\epsilon}(v)$, 
$f_v^{-1}\left(f_{\sigma}(K)\right) \subset B(x,R_0)$. 
\end{Lem}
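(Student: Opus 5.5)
The plan is to undo $f_v$ and reduce everything to a bounded configuration near $K$, then chain the triangle function $\Phi_d$ a fixed number of times. I read $\Pi_{\epsilon}(v)$ in the statement as $\Gamma_{\epsilon}(v)$, with $\epsilon\in(0,1)$.

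First, $f_v$ is a bijection of $X$. It is surjective by Assumption \ref{ass:additional-f} (ii), and injective because $d(f_v(x_1),f_v(x_2)) = r_v d(x_1,x_2)$ with $r_v>0$. Moreover $f_v^{-1}$ scales all distances by $r_v^{-1}$, so $\textup{diam}(f_v(G_\epsilon)) = r_v\,\textup{diam}(G_\epsilon)$.

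Now let $\sigma\in\Gamma_\epsilon(v)$ and put $A\coloneqq f_v^{-1}(f_\sigma(K))$. Membership of $\sigma$ in $\Pi_{\textup{diam}(f_v(G_\epsilon))}$ gives $r_\sigma \textup{diam}(K)\le r_v\,\textup{diam}(G_\epsilon)$. Hence
\[ \textup{diam}(A) = r_v^{-1} r_\sigma\, \textup{diam}(K) \le \textup{diam}(G_\epsilon). \]
Also, $f_\sigma(K)\cap f_v(G_\epsilon)\neq\emptyset$ together with bijectivity of $f_v$ gives $A\cap G_\epsilon\ne\emptyset$. So there is a point $y\in A$ and a point $z\in K$ with $d(y,z)<\epsilon<1$.

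Next I bound $\textup{diam}(G_\epsilon)$ using only $D\coloneqq\textup{diam}(K)$, which is finite by Lemma \ref{lem:bounded-basic}. Take $p,q\in G_\epsilon$ and $k_1,k_2\in K$ with $d(p,k_1)<1$ and $d(q,k_2)<1$. Applying the triangle function twice and using monotonicity of $\Phi_d$ gives
\[ d(p,q)\le \Phi_d\bigl(1,\Phi_d(D,1)\bigr)\eqqcolon D_1, \]
which is finite by normality.

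Then for any $a\in A$ I estimate, again by monotonicity,
\[ d(z,a)\le \Phi_d\bigl(d(z,y),d(y,a)\bigr)\le \Phi_d(1,D_1)\eqqcolon D_2 . \]
Finally, since $x,z\in K$, we get $d(x,a)\le\Phi_d(d(x,z),d(z,a))\le \Phi_d(D,D_2)$. Therefore
\[ R_0\coloneqq \Phi_d(D,D_2)+1 \]
works, and it is finite by the normal property. It depends only on $D$ and the fixed function $\Phi_d$; in particular it is independent of $x$, $v$, $\sigma$ and $\epsilon\in(0,1)$.

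There is no serious obstacle. The step that needs care is making sure the diameter comparison transfers correctly through $f_v^{-1}$, which uses both surjectivity and exact similarity. I also have to keep every use of $\Phi_d$ monotone with uniformly bounded arguments. This is why I bound $\textup{diam}(G_\epsilon)$ by a constant independent of $\epsilon$, rather than trying to relate it to $\textup{diam}(K)$ linearly, which the paper notes is hard.
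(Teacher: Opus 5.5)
Your proposal is correct and follows essentially the same route as the paper. Both arguments take a pivot point in $f_v^{-1}(f_\sigma(K))\cap G_\epsilon$, bound $\textup{diam}(f_v^{-1}(f_\sigma(K)))\le \textup{diam}(G_\epsilon)\le \Phi_d(1,\Phi_d(\textup{diam}(K),1))$ using $\epsilon<1$, and chain $\Phi_d$ through a nearby point of $K$ to reach $x$. The only differences are how the compositions of $\Phi_d$ are grouped and hence the explicit value of $R_0$; your reading of $\Pi_\epsilon(v)$ as $\Gamma_\epsilon(v)$ is also what the paper's proof actually uses.
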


\begin{proof}
Let $x \in K$. 
Take a point $z \in f_{\sigma}(K) \cap f_{v}(G_{\epsilon})$. 
Then, for every $y \in f_{\sigma}(K)$, 
\[ d(f_v^{-1}(y), x) \le \Phi_d \left( d(f_v^{-1}(y), f_v^{-1}(z)),  d(f_v^{-1}(z), x)\right). \]
We see that 
\[ d(f_v^{-1}(y), f_v^{-1}(z)) \le r_v^{-1} r_{\sigma} \textup{diam}(K) \le \textup{diam}(G_{\epsilon}) \le \Phi_d (\epsilon, \Phi_d (\epsilon, \textup{diam}(K))) \]
and $d(f_v^{-1}(z), x) \le \Phi_d (\textup{diam}(K), \epsilon)$. 
Recall that $0 < \epsilon < 1$. 
Let $R_1 \coloneqq \Phi_d (\textup{diam}(K), 1)$ and $R_2 \coloneqq R_1 + \Phi_d (1, R_1)$. 
Then, 
\[ d(f_v^{-1}(y), x) \le \Phi_d \left( \Phi_d (1, R_1),  R_1 \right) \le \Phi_d (R_2, R_2). \]
Now the assertion holds if $R_0 \coloneqq 1 + \Phi_d (R_2, R_2)$. 
\end{proof}

\begin{Prop}
$\sup_{v \in \{1, \dots, \ell\}^{*}} \left|\Gamma_{\epsilon}(v) \right| < \infty$.
\end{Prop}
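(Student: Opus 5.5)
Fix $\epsilon \in (0,1)$ and $v \in \{1,\dots,\ell\}^{*}$, and write $b_v \coloneqq \textup{diam}(f_v(G_\epsilon)) = r_v \textup{diam}(G_\epsilon)$. The bound will be a packing count: $|\Gamma_\epsilon(v)|$ is controlled by the number of disjoint balls of comparable radius inside a fixed ball. Because the naive separation of the pieces comes from Lemma \ref{lem:sep-dist-Haus}, which separates sets in Hausdorff distance, I will do the count in the hyperspace of Lemma \ref{lem:set-of-closed-is-cpt-wrt-Haus-dist} rather than with Lemma \ref{lem:number-disjoint}. To use scale invariance, I pull everything back by $f_v^{-1}$; this is legitimate because $f_v$ is a bijective similitude, so by Lemma \ref{lem:ball-surjection} it scales $d$, $d_H$ and diameters by exactly $r_v$.

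\emph{Step 1.} For $\sigma \in \Gamma_\epsilon(v)$, set $C_\sigma \coloneqq f_v^{-1}(f_\sigma(K))$. This is a nonempty compact set. By Lemma \ref{lem:inverse-contained}, read with $\Gamma_\epsilon(v)$ in place of the misprinted $\Pi_\epsilon(v)$, we have $C_\sigma \subset B(x,R_0)$ for a fixed $x \in K$. Hence $C_\sigma \subset E \coloneqq \overline{B(x,R_0)}$, and $E$ is compact by Lemma \ref{lem:doubling-closure-ball-cpt}. So every $C_\sigma$ lies in $\mathbf{C}(E)$, a set that does not depend on $v$.

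\emph{Step 2 (separation).} Take distinct $\sigma, \tau \in \Gamma_\epsilon(v)$. They lie in the minimal cut-set $\Pi_{b_v}$, so they are incomparable. By the definition \eqref{eq:def-cutset-2},
\[ r_\sigma\,\textup{diam}(K) \le b_v < r_{\sigma'}\,\textup{diam}(K) \le r_{\min}^{-1} r_\sigma\,\textup{diam}(K), \]
and the same holds for $\tau$. Therefore $r_\tau > r_{\min} r_\sigma$ whenever $r_\tau \ge r_\sigma$. If instead the ratio condition fails in one order, I swap the roles: after relabeling so that $r_\sigma \le r_\tau$, the condition $r_\tau > r_{\min} r_\sigma$ holds trivially. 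Lemma \ref{lem:sep-dist-Haus} then gives $d_H(f_\sigma(K), f_\tau(K)) \ge \delta r_\sigma$. Since $r_\sigma > r_{\min} b_v/\textup{diam}(K)$ and $b_v = r_v \textup{diam}(G_\epsilon)$, scaling by $r_v^{-1}$ yields
\[ d_H(C_\sigma, C_\tau) \ge \eta \coloneqq \frac{\delta\, r_{\min}\, \textup{diam}(G_\epsilon)}{\textup{diam}(K)} > 0, \]
with $\eta$ independent of $v$.

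\emph{Step 3 (counting).} By Lemma \ref{lem:set-of-closed-is-cpt-wrt-Haus-dist}, $\mathbf{C}(E)$ is totally bounded and regular for $d_H$. Choose $\theta > 0$ with $\Phi_{d_H}(\theta,\theta) < \eta$, which is possible by regularity. Cover $\mathbf{C}(E)$ by finitely many $d_H$-balls of radius $\theta$, say $M$ of them. If two of the sets $C_\sigma$ fell into the same ball, their $d_H$-distance would be less than $\eta$, contradicting Step 2. Hence $|\Gamma_\epsilon(v)| \le M$, and $M$ depends only on $\epsilon$, $E$ and $\eta$, not on $v$.

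The main obstacle is the regularity of the semimetric $d_H$ used in Step 3. Lemma \ref{lem:set-of-closed-is-cpt-wrt-Haus-dist} cites \cite[Lemma 7]{BessenyeiPenzes2022} for regularity, and I need it to produce a concrete $\theta$ with $\Phi_{d_H}(\theta,\theta) < \eta$. If that citation does not give this, I would fall back to working in $X$ directly. Pick points $y_\sigma \in C_\sigma$ together with witness points that realize the Hausdorff separation, and then count them using geometric doubling inside $B(x,R_0)$, in the manner of Step 2 of Lemma \ref{lem:number-disjoint}.
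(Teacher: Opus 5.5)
Your proof is correct and follows essentially the same route as the paper. You pull the pieces back by $f_v^{-1}$ into the fixed compact set $\overline{B(x,R_0)}$ via Lemma \ref{lem:inverse-contained}, get a $v$-independent lower bound on their mutual Hausdorff distances from Lemma \ref{lem:sep-dist-Haus}, and count them using total boundedness and regularity of $d_H$ on $\mathbf{C}(\overline{B(x,R_0)})$ from Lemma \ref{lem:set-of-closed-is-cpt-wrt-Haus-dist}; the relabeling is unnecessary since your displayed chain already gives $r_\tau > r_{\min} r_\sigma$ in both orders, and the fallback is not needed because the paper uses exactly that regularity of $d_H$ (it also simplifies your $\eta$ to $\delta r_{\min}$ using $\textup{diam}(G_\epsilon) \ge \textup{diam}(K)$).
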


\begin{proof}
Let $v \in \{1, \dots, \ell\}^{*}$. 
Let $\sigma, \tau \in \Gamma_{\epsilon}(v)$ and $\sigma \ne \tau$. 
Then, $\sigma, \tau \in \Pi_{r_{v} \textup{diam}(G_{\epsilon})}$. 
Hence $\sigma$ and $\tau$ are incomparable and $r_{\tau} > r_{\sigma} r_{\min}$. 
Hence, by Lemma \ref{lem:sep-dist-Haus}, 
$d_H (f_{\sigma}(K), f_{\tau}(K)) \ge \delta r_{\sigma}$. 
Therefore, 
\begin{equation}\label{eq:sep-Haus-different-strings}
d_H \left(f_v^{-1}(f_{\sigma}(K)), f_v^{-1}(f_{\tau}(K)) \right) \ge \delta \frac{r_{\sigma}}{r_{v}}  \ge \delta r_{\min} \frac{r_{\sigma^{\prime}}}{r_{v}} > \delta r_{\min}  \frac{\textup{diam}(G_{\epsilon})}{\textup{diam}(K)} \ge \delta r_{\min}. 
\end{equation} 

Let $x \in K$. 
Then by Lemma \ref{lem:doubling-closure-ball-cpt}, $\overline{B(x,R_0)}$ is compact. 
By Lemma \ref{lem:inverse-contained}, $f_v^{-1} (f_{\sigma}(K)) \subset B(x,R_0)$. 
Since $f_v$ is a homeomorphism, 
$f_v^{-1} (f_{\sigma}(K))$ is compact and closed in $X$ for every $\sigma$. 
Hence $f_v^{-1} (f_{\sigma}(K)) \in \mathbf{C}\left(\overline{B(x,R_0)} \right)$. 

By Lemma \ref{lem:set-of-closed-is-cpt-wrt-Haus-dist}, $\left(\mathbf{C}\left(\overline{B(x,R_0)}\right), d_H \right)$ is a totally bounded regular semimetric space. 
By the regularity of $d_H$, 
there exists $\eta = \eta\left( \delta r_{\min} \right) > 0$ such that $\textup{diam}(B_{d_H} (Z, \eta)) < \frac{\delta r_{\min}}{2}$ for every $Z \in  \mathbf{C}\left(\overline{B(x,R_0)} \right)$. 
Furthermore, there exist $N = N_{\eta}$ and $A_1, \dots, A_N \in  \mathbf{C}\left(\overline{B(x,R_0)} \right)$ such that  $ \mathbf{C}\left(\overline{B(x,R_0)} \right) = \bigcup_{i=1}^{N} B_{d_H}(A_i, \eta)$. 
By \eqref{eq:sep-Haus-different-strings}, each $B_{d_H}(A_i, \eta)$ contains at most one of  $f_v^{-1} (f_{\sigma}(K))$ and $f_v^{-1} (f_{\tau}(K))$ if $\sigma \ne \tau$. 
Thus we see that $ \left|\Gamma_{\epsilon}(v) \right|  \le N_{\eta}$. 
\end{proof}

Since $\left|\Gamma_{\epsilon}(v) \right|$ is an integer, there exists $v_0 \in \{1,\dots, \ell\}^{*}$ such that $\left|\Gamma_{\epsilon}(v_0) \right| = \sup_{v \in \{1, \dots, \ell\}^{*}} \left|\Gamma_{\epsilon}(v) \right|$. 
We remark that $\Gamma_{\epsilon}(v) \ne \emptyset$ for some $v \in \{1, \dots, \ell\}^{*}$.  
Indeed, there exists $v \in \{1,\dots, \ell\}^{*}$ such that $r_{v} \textup{diam}(G_{\epsilon}) < \textup{diam}(K)$. 
Then, there exists a prefix $v^{\prime}$ of $v$ such that $v^{\prime} \in \Pi_{\textup{diam}(f_{v}(G_{\epsilon}))}$. 
Since $K$ is the attractor,  $f_{v^{\prime}}(K) \cap f_{v}(G_{\epsilon}) \supset f_{v}(K) \ne \emptyset$.  
Hence $\left|\Gamma_{\epsilon}(v_0) \right| \ge 1$. 

Let $\tau \in \Gamma_{\epsilon}(v_0)$. 
Then for every $\sigma \in \{1,\dots, \ell\}^{*}$, $\sigma \tau \in \Gamma_{\epsilon}(\sigma v_0)$. 
Hence $ \{\sigma \tau \colon \tau \in \Gamma_{\epsilon}(v_0)\} \subset \Gamma_{\epsilon}(\sigma v_0)$. 
By the maximality, 
\[ \Gamma_{\epsilon}(\sigma v_0) = \{\sigma \tau \colon \tau \in \Gamma_{\epsilon}(v_0)\}.\]

We now obtain a separation result for several pairs of $\{f_{\sigma}(K) \colon \sigma \in \{1, \dots, \ell\}^{*} \}$ with respect to the distance between subsets of $X$. 
Let $i \ne j$, $\sigma \in \{1, \dots, \ell\}^{*}$ and $j \tau \in \Pi_{\textup{diam}(f_{i \sigma v_0}(G_{\epsilon}))}$. 
Since $j \tau \notin \Gamma_{\epsilon}(i\sigma v_0)$, $f_{j \tau}(K) \cap f_{i \sigma v_0}(G_{\epsilon}) = \emptyset$. 
We can show that  
\[ \textup{dist}\left(f_{j \tau}(K), f_{i \sigma v_0}(K) \right) \ge \epsilon r_{i \sigma v_0}.\]  
Indeed, if this failed, then there would exist $x_1, x_2 \in K$ such that $d(f_{j \tau}(x_1), f_{i \sigma v_0}(x_2)) < \epsilon r_{i \sigma v_0}$. 
Since $ f_{i \sigma v_0}$ is a similitude with rate $r_{i \sigma v_0}$, 
$d\left(f_{i \sigma v_0}^{-1}(f_{j \tau}(x_1)), x_2 \right) < \epsilon$. 
By the definition of $G_{\epsilon}$, $f_{i \sigma v_0}^{-1}(f_{j \tau}(x_1)) \in G_{\epsilon}$ and hence $f_{j \tau}(x_1) \in f_{j \tau}(K) \cap f_{i \sigma v_0}(G_{\epsilon})$. 
This is a contradiction. 
Since $\left|\Gamma_{\epsilon}(v_0) \right| \ge 1$, $\Pi_{r_{v_0} \textup{diam}(G_{\epsilon})} \ne \emptyset$ and hence 
\[ \textup{diam}(f_{i \sigma v_0}(G_{\epsilon})) = r_{i \sigma v_0} \textup{diam}(G_{\epsilon}) < r_{v_0} \textup{diam}(G_{\epsilon}) < \textup{diam}(K).\]
Hence $\Pi_{\textup{diam}(f_{i \sigma v_{0}}(G_{\epsilon}))}$ is a cut-set. 
Thus we see that 
\[ f_j (K) = \bigcup_{j\tau \in \Pi_{\textup{diam}(f_{i \sigma v_{0}}(G_{\epsilon}))}} f_{j\tau}(K). \] 
Hence, 
\begin{equation}\label{eq:separation-set-dist}  
\textup{dist}\left(f_{j}(K), f_{i \sigma v_{0}}(K) \right) \ge \epsilon r_{i \sigma v_{0}}. 
\end{equation}

By Assumption \ref{ass:two-additional-metric} (i), 
there exist $R_0 \in (0,1)$ and $C_0 > 1$ such that for every $t \in [0, R_0]$, $\Phi_d (t,t) \le C_0 t$. 
Let $\epsilon \in (0, R_0 /2)$. 
Let 
\[ G^* \coloneqq \bigcup_{x \in K} \textup{int} \left(B\left(x, \frac{\epsilon}{2C_0} \right)\right).\] 
Take $v_0 \in \{1, \dots, \ell\}^{*}$ such that  $\left|\Gamma_{\epsilon}(v_0) \right| = \sup_{v \in \{1, \dots, \ell\}^{*}} \left|\Gamma_{\epsilon}(v) \right|$. 
Let 
\[ V \coloneqq \bigcup_{\sigma \in \{1, \dots, \ell\}^{*}} f_{\sigma v_0} \left(G^{*} \right). \]  
Then $V$ is a nonempty open subset of $X$. 
Let $x \in V$. 
Then there exist $\sigma \in \{1, \dots, \ell\}^{*}$ and $y \in G^*$ such that $x = f_{\sigma v_{0}} (y)$. 
There exists $z \in K$ such that $d(y,z) < \epsilon/(2C_0)$. 
Then $d(x, f_{\sigma v_{0}}(z)) < r_{\sigma v_{0}}  \epsilon/(2C_0) < R_0 / (4 C_0)$. 
Hence $x \in G_{R_0 / (4 C_0)}$. 
Thus we see that $V \subset G_{R_0 / (4 C_0)}$; in particular, $V$ is bounded. 

Let $x \in f_i (V)$. 
Then there exists $\sigma  \in \{1, \dots, \ell\}^{*}$ such that $x \in f_i (f_{\sigma v_0}(G^{*})) = f_{\tau v_0}(G^{*}) \subset V$ for $\tau = i \sigma$. 
Hence $f_i (V) \subset V$. 

Finally we show that $f_i (V) \cap f_j (V) = \emptyset$ for $i \ne j$. 
We show this by contradiction. 
Let $y \in f_i (V) \cap f_j (V)$. 
Then there exist $\sigma, \tau \in \{1, \dots, \ell\}^{*}$ such that $y \in f_{i \sigma v_0} (G^*) \cap f_{j \tau v_0} (G^*)$. 
There exist $y_1 \in f_{i \sigma v_0}(K)$ and $y_2 \in f_{j \tau v_0}(K)$ such that $d(y_1, y) < r_{i \sigma v_0} \epsilon /(2 C_0)$ and $d(y_2, y) < r_{j \tau v_0} \epsilon /(2 C_0)$. 
By the strong regularity in Assumption \ref{ass:two-additional-metric} (i), 
\[ d(y_1, y_2) \le \Phi_d (d(y_1, y), d(y_2, y)) \le C_0 \max\{d(y_1, y), d(y_2, y)\} \le \frac{\epsilon}{2} \max\left\{r_{i \sigma v_0}, r_{j \tau v_0} \right\}. \]
Hence $\textup{dist}\left(f_{i \sigma v_0}(K),  f_{j \tau v_0}(K)\right) < \epsilon \max\{r_{i \sigma v_0}, r_{j \tau v_0} \}$. 
This contradicts \eqref{eq:separation-set-dist}. 

Thus the proof of Theorem \ref{thm:osc-necessary} is completed. 

\begin{Rem}
(i) Lemma \ref{lem:inverse-contained} corresponds to \cite[(9.6.3)]{BishopPeres2017}; however, the proof is a little different. \\
(ii) We have {\it not} assumed that $\textup{diam}(K) = 1$ since we cannot guarantee that $\textup{diam}(G_{\epsilon}) = 1+ O(\epsilon), \ \epsilon \to +0$. 
This is the reason for introducing \eqref{eq:def-cutset-2} instead of \eqref{eq:def-cutset-1}. 
\end{Rem}

\section{Examples}\label{sec:exa}

\begin{Exa}
We consider the case where $X = \mathbb{R}$ and $d(x, x^{\prime}) = |x-x^{\prime}|^{\beta}$ for some $\beta > 0$. 
The pair $(X,d)$ is a  semimetric space and is a metric space if $\beta \le 1$. 
It holds that $B(x,r) = B_{\textup{Euc}}(x,r^{1/\beta})$ for every $x \in X$ and $r > 0$, where $B_{\textup{Euc}}$ denotes a ball  under the Euclidean distance. 
Hence $(X,d)$ is complete and Assumption \ref{ass:two-additional-metric} (ii) holds.
Furthermore, it holds that for $A \subset X$, 
$\mathcal{H}^r (A) = \mathcal{H}^{\beta r}_{\textup{Euc}}(A)$ and hence $\dim_H (A) = \beta^{-1} \dim_H^{\textup{Euc}} (A)$
where $\mathcal{H}_{\textup{Euc}}$ is the Hausdorff measure and $\dim_H^{\textup{Euc}}$ is the Hausdorff dimension with respect to the Euclidean distance.

By subadditivity for $0 < \beta \le 1$ and convexity for $\beta \ge 1$, 
$(x+ y)^{\beta} \le \max\{1,2^{\beta -1}\} (x^{\beta} + y^{\beta})$ for $x, y \ge 0$. 
Let $\Phi  (u,v) \coloneqq \max\{1,2^{\beta-1}\} (u+v)$. 
Then this is a triangle function on $(X,d)$, which is continuous at $(0,0)$. 
Therefore the semimetric space $(X,d)$ is normal and regular and satisfies Assumption \ref{ass:two-additional-metric} (i). 

Let $r_1, r_2 \in (0,1)$ and $f_1 (x) \coloneqq r_1^{1/\beta} x$ and $f_2 (x) \coloneqq r_2^{1/\beta} x + 1 - r_2^{1/\beta}$. 
Then, $d(f_j (x), f_j (y)) = r_j d(x,y)$ for $x, y \in \mathbb{R}$ and $ j \in \{1,2\}$. 
In particular each $f_j$ is a similitude. 
By Theorem \ref{thm:attractor}, there exists a unique attractor $K$ of $\{f_j\}_j$.

If $r_1^{1/\beta} + r_2^{1/\beta}  \le 1$, then the open set condition for $\{f_j\}_j$  holds for $V = (0,1)$. 
We remark that each $f_j$ is surjective. 
Hence Assumption \ref{ass:additional-f} holds and Theorem \ref{thm:main} is applicable. 
If $r_1^{1/\beta} + r_2^{1/\beta} > 1$, then, by the uniqueness of the attractor, $K = [0,1]$. 
Hence $\dim_H K = 1/\beta$ and the conclusion of Theorem \ref{thm:main} fails. 
The open set condition also fails.
\end{Exa}

\begin{Exa}
We consider the case where $X = \mathbb{R}^2$ and $d\left((x_1,x_2), (y_1, y_2)\right) = \max\left\{|x_1 - y_1|^{\beta_1},  |x_2 - y_2|^{\beta_2} \right\}$ for some $\beta_1, \beta_2 > 0$. 
The pair $(X,d)$ is a  semimetric space. 
It holds that $B\left((x_1,x_2),r \right) = \left(x_1-r^{1/\beta_1}, x_1+r^{1/\beta_1}\right) \times \left(x_2 - r^{1/\beta_2}, x_2 + r^{1/\beta_2} \right)$ for every $(x_1, x_2) \in X$ and $r > 0$.  
Hence $(X,d)$ is complete and Assumption \ref{ass:two-additional-metric} (ii) holds.

Let $\Phi  (u,v) \coloneqq \max\{1,2^{\beta_1 - 1}, 2^{\beta_2 -1}\} (u+v)$. 
Then this is a triangle function on $(X,d)$. 
Therefore the semimetric space $(X,d)$ is normal and regular and satisfies Assumption \ref{ass:two-additional-metric} (i). 

Let $r_j \in (0,1), \ j \in \{1,2,3,4\}$. 
Let $f_1 (x_1, x_2) \coloneqq (r_{1}^{1/\beta_1} x_1, r_{1}^{1/\beta_2} x_2)$, $f_2 (x_1, x_2) \coloneqq (r_{2}^{1/\beta_1} x_1, r_{2}^{1/\beta_2} x_2 + 1 - r_{2}^{1/\beta_2})$,  $f_3 (x_1, x_2) \coloneqq (r_{3}^{1/\beta_1} x_1 + 1 - r_{3}^{1/\beta_1}, r_{3}^{1/\beta_2} x_2)$, and $f_4 (x_1, x_2) \coloneqq (r_{4}^{1/\beta_1} x_1 + 1 - r_{4}^{1/\beta_1}, r_{4}^{1/\beta_2} x_2 + 1 - r_{4}^{1/\beta_2})$. 
Then, $d(f_j (x_1, x_2), f_j (y_1, y_2)) = r_j d((x_1, x_2), (y_1, y_2))$ for $(x_1, x_2), (y_1, y_2) \in \mathbb{R}^2$ and $ j \in \{1,2,3,4\}$. 
In particular each $f_j$ is a similitude. 
By Theorem \ref{thm:attractor}, there exists a unique attractor $K$ of $\{f_j\}_j$. 

Assume that four open rectangles $(0,r_{1}^{1/\beta_1}) \times (0,r_{1}^{1/\beta_2}), (0,r_{2}^{1/\beta_1}) \times (1-r_{2}^{1/\beta_2},1), (1-r_{3}^{1/\beta_1},1) \times (0,r_{3}^{1/\beta_2}), (1-r_{4}^{1/\beta_1},1) \times (1-r_{4}^{1/\beta_2},1)$ are pairwise disjoint.
Then the open set condition for $\{f_j\}_j$ holds for $V = (0,1) \times (0,1)$. 
We remark that each $f_j$ is surjective. 
Hence Assumption \ref{ass:additional-f} holds and Theorem \ref{thm:main} is applicable. 
We obtain that $\dim_H K = \alpha$ where $\alpha$ is a positive real number satisfying $\sum_{j} r_j^{\alpha} = 1$. 
\end{Exa}

\bibliographystyle{plain}
\bibliography{semimetric-Hutchinson}

\end{document}